\newtheorem{theorem}{Theorem}[section]
 \newtheorem{lemma}{Lemma}[section]
 \newtheorem{remark}{Remark}
 \newtheorem{assumption}{Assumption}
\theoremstyle{remark}
\numberwithin{equation}{section}
\newcommand{\norm}[1]{\left\Vert#1\right\Vert}
\begin{document}
\title{
 Solving the inverse potential problem in the parabolic equation by the deep neural networks method
}

\author[1,2]{Mengmeng Zhang\thanks{mmzhang@hebut.edu.cn}}
\author[3]{Zhidong Zhang\thanks{zhangzhidong@mail.sysu.edu.cn}}
\affil[1]{\small{School of Science, Hebei University of Technology, Tianjin 300401,  China\\}}
\affil[2]{\small{Nanjing Center for Applied Mathematics\\
Nanjing, 211135, China\\}}
\affil[3]{\small{School of Mathematics (Zhuhai), Sun Yat-sen University, Zhuhai 519082, Guangdong, China\\}}

\maketitle


\begin{abstract}
In this work, we consider an inverse potential problem in the parabolic equation, where the unknown potential is a space-dependent function and the used measurement is the final time data. The unknown potential in this inverse problem is parameterized by deep neural networks (DNNs) for the reconstruction scheme. First, the uniqueness of the inverse problem is proved under some regularities assumption on the input sources. Then we propose a new loss function with regularization terms depending on the derivatives of the residuals for partial differential equations (PDEs) and the measurements. These extra terms effectively induce higher regularity in solutions so that the ill-posedness of the inverse problem can be handled. Moreover, we establish the corresponding generalization error estimates rigorously. Our proofs exploit the conditional stability of the classical linear inverse source problems, and the mollification on the noisy measurement data which is set to reduce the perturbation errors. Finally, the numerical algorithm and some numerical results are provided.
\end{abstract}

{\bf AMS subject classifications:} 34K28, 35R30, 65N15, 62M45.\\

{\bf Keywords:}\ inverse potential problem, deep neural networks, uniqueness, generalization error estimates, numerical reconstruction.

\section{Introduction.}\label{Sec_intro}
\subsection{Mathematical model.}

\par The following parabolic system is considered in this work:
\begin{equation}\label{eq1}
\begin{cases}
\begin{aligned}
(\partial_t -\Delta +q(x))u&=F(x,t), &&(x,t)\in \Omega_T,\\
u(x,t)&=b(x,t), &&(x,t)\in\partial\Omega_T,\\
u(x,0)&=u_0(x), &&x\in\Omega.
\end{aligned}
\end{cases}
\end{equation}
Here we write $\Omega_T=\Omega\times(0,T]$ and $\partial \Omega_T=\partial\Omega\times(0,T]$ for short, and $\Omega\subset \mathbb{R}^d$ is an open bounded domain in $\mathbb R^d$ with sufficiently smooth boundary. $F(x,t),\ u_0(x),\ b(x,t)$ are the source term, initial status, boundary condition respectively, causing the heat propagation in the medium. The potential function $q(x) \in L^{\infty}(\Omega)$, called the heat radiative coefficient of the material, is a crucial parameter for characterizing the heat conduction process. It describes the ability of the medium to propagate heat from internal sources or sinks. For known $(F(x,t),u_0(x),b(x,t), q(x))$ with suitable regularities, the forward problem (\ref{eq1}) is well-posed in appropriate function space \cite{Evans}. In this work, we consider the inverse problem of recovering the unknown $q(x)$, where the used measurement is the final time data
\begin{eqnarray}\label{eq2}
    u(x,T):=\varphi(x), \quad x\in\Omega.
\end{eqnarray}
%


In practical applications of inverse problems, the contamination on inverse problems is unavoidable. So we will be given the noisy data $\varphi^\delta$ instead of the exact data $\varphi(x)$ in (\ref{eq2}), which satisfies
\begin{eqnarray}\label{eq3}
\|\varphi^\delta-\varphi\|_{L^\infty(\Omega)}\leq \delta.
\end{eqnarray}
To handle the effect caused by the perturbations, people need to develop effective methods to improve the accuracy and robustness in applications.
In this study, we choose the deep neural networks (DNNs) to solve the inverse problem \eqref{eq1}-\eqref{eq3}. Comparing to traditional methods for solving inverse potential problem, this approach demonstrates the superiority in high-dimensional space and has the advantage of breaking the curse of dimensionality.

There are rare works on studying the inverse potential problem for parabolic equations using deep neural networks, especially the rigorous analysis of its convergence estimate. In this work, the authors will consider the solution of the inverse potential problem \eqref{eq1}-\eqref{eq3}  parameterized by DNNs for the reconstruction scheme. We propose a new loss function with regularization terms depending on the derivatives of the residuals for PDEs and measurements. The mollification method has been employed to improve the regularity of the noisy data. Also, the generalization error estimates are rigorously derived from the conditional stability of the linear inverse source problem and the mollification error estimate on noisy data.

\subsection{Literature.}

The reconstructions of $q(x)$ in \eqref{eq1} from some inversion input data have been studied extensively. For zero initial status, the uniqueness for $q(x)$ by
\eqref{eq1}-\eqref{eq2} is established in \cite{Prilepko}, while the unique reconstruction using final measurement data is studied in \cite{Prilepko1}.
In the case of non-zero initial status, the existence and uniqueness of the generalized solution $(u(x,t),q(x))\in W_p^{2,1}(\Omega_T) \times L^p(\Omega) $ with the time-average temperature measurement are given in \cite{Kamynin} for $(u_0,\varphi)$ with some regularities.
Choulli and Yamamoto \cite{Choulli} prove the generic well-posedness of the inverse problem in H\"{o}lder spaces by final measurement data, and then the conditional stability result in a Hilbert space setting for sufficiently small $T$ is studied in \cite{Choulli1}.
Chen et al \cite{ChenDH} consider the inverse potential problem from a partial measurements over $[T_0,T_1]\times \Omega$ with $[T_0,T_1]\subset [0,T]$, where the conditional stability estimates of the inverse problem in some Sobolev space and the reasonable convergence rates of the  Tikhonov regularization are derived.
Recently, Jin et al \cite{Jin1} uses the same observational data and shows a weighted $L^2$ stability in the standard $L^2$ norm under a positivity condition. They provide an error analysis of reconstruction scheme based on the standard output least-squares formulation with Tikhonov regularization (by an $H^1$-seminorm penalty).
Zhang et al \cite{Zhang-Zhou} prove the uniqueness of the identification from final time data for (sub)diffusion equation and show the conditional stability in Hilbert spaces under some suitable conditions on the problem data. The convergence and error analysis of the reconstruction discrete scheme are rigorously analyzed. The investigations in the inverse non-smooth potential problem are given in \cite{Zhangliu}, where the uniqueness for this nonlinear inverse problem is proved. Numerically, an iterative process called two-point gradient method is proposed by minimizing the  data-fit term and the penalty term alternatively, with a convergence analysis in terms of the tangential condition.
There also exists some works involving multiple coefficient identification. For example, Yamamoto and Zou \cite{zou1} investigate the simultaneous reconstruction of the initial temperature and heat radiative coefficient in a heat conductive system, with stability of the inverse problem and the reconstruction scheme. Kaltenbacher and Rundell \cite{Kaltenbacher2} consider the inverse problem of simultaneously recovering two unknowns, spatially dependent conductivity and the potential function from overposed data consisting of $u(x,T)$. The uniqueness result and the convergence of an iteration scheme are established. We also refer to
\cite{Lesnic,Chen,Deng1,Isakov2,Rundell,Lesnic1} and the references therein for the inverse potential problems in diffusion models from different types of observational data.

Recently, deep learning methods for solving PDEs have been realized as an effective approach, especially in high dimensional PDEs. Such methods have the advantage of breaking the curse of dimensionality. The basic idea is to use neural networks (nonlinear functions) to approximate the unknown solutions of PDEs by learning the parameters. For the forward problems, there exists many numerical works with deep neural networks involving the depth Ritz method (DRM) \cite{EWN-DRM}, the depth Galerkin method (DGM) \cite{Sirignano-DGM}, the DeepXDE method \cite{Lu-DeepXDE}, depth operator network method (DeepONet) \cite{Lu-DeepONet-2}, physical information neural networks (PINNs) \cite{Raissi}, the weak adversary neural network (WAN) \cite{Bao2, Bao1} and so on.
Theoretically, there are some rigorous analysis works investigating the convergence and error estimates for the solution of PDEs via neural networks, but the result are still far from complete. For example, the convergence rate of DRM with two layer networks and deep networks are  studied in \cite{Duan_2022,Hong_2021, Lu_2021,Luo_2020}; the convergence of PINNs is given in \cite{Jiao,Mishra2,de2023error,Shin1,Shin2}. For the inverse problems, the PINNs frameworks can be employed to solve the so-called data assimilation or unique continuation problems, and rigorous estimates on the generalization error of PINNs are established in \cite{Mishra2}.
Bao et al \cite{Bao1} develop the WAN to solve electrical impedance tomography (EIT) problem. In \cite{Zhang-Li-Liu}, the authors study a classical linear inverse source problem using the final time data under the frameworks of neural networks, where a rigorous generalization error estimate is proposed with a novel loss function including the Sobolev norm of some residuals. For more specific inverse problems applied in engineering and science, we refer to \cite{Chen-Lu,Raissi-Yazdani,Shukla}.

\subsection{Outline.}

The rest of this article is organized as follows. In Section \ref{sec_pre} we introduce the knowledge of neural networks and the setting of mollification. In Section \ref{sec_uni}, we introduce a conditional stability of the linear inverse source problem first. Then the uniqueness theorem (Theorem \ref{thm2}) of this inverse potential problem can be proved followed from the conditional stability.
In Section \ref{sec_main}, a novel loss function with specific regularization terms is introduced. Then we prove the generalization error estimates of data-driven solution of inverse problems, which is stated in Theorem \ref{main}.
In Section \ref{sec_num}, we propose the reconstruction algorithm and provide several experiments to show the validity of the proposed algorithm.

\section{Preliminaries.}\label{sec_pre}
\subsection{Neural network architecture.}
First we introduce the basic knowledge of neural network briefly. Note that $u_\theta$ and $q_\eta$ are two separate networks with different variables $(x,t)$ and $x$.
Thus, we use $\xi$ to denote collectively the network parameters for a parametric function $s_\xi(z)$
such that a general scheme can be applied for either $u_{\theta}(x,t)$ (with $z=(x,t),\ \xi=\theta$)
or $q_{\eta}(x)$ (with $z=x,\ \xi=\eta$).
For a positive integer $K\in \mathbb{N}$, a $K$-layer feed-forward neural
network of $s_\xi(z)$ for $z\in\mathbb{R}^{d_0}$ is a function $s_\xi(z)$ defined by
\begin{eqnarray}\label{DeepNN}
s_\xi(z):=W_{K} l_{K-1} \circ \cdots \circ l_{1}(z)+b_{K},
\end{eqnarray}
where the $k$-th layer $l_{k}: \mathbb{R}^{d_{k-1}} \rightarrow \mathbb{R}^{d_{k}}$
is given by $l_{k}(z)=\sigma\left(W_{k} z+b_{k}\right)$ with weights
$W_{k} \in \mathbb{R}^{d_{k} \times d_{k-1}}$ and biases
$b_{k} \in \mathbb{R}^{d_{k}}$ for $k=2, \cdots, K$. The activation function $\sigma(\cdot)$ includes sigmoid, tanh, ReLU (Rectified Linear Unit), softmax and so on \cite{Goodfellow-et-al-2016}. These activation functions introduce non-linearities and enable the network to learn complex patterns and relationships in the data.
 The neural network (\ref{DeepNN}) consists of an input layer with argument $z$, where $d_{0}=d$ is the problem dimension (also known as the size of input layer), an output layer which has the weights $W_{K} \in \mathbb{R}^{d_{K}\times d_{K-1}}$ and biases $b_{K} \in \mathbb{R}^{d_{K}}$, and $K-1$ hidden layers for some $K\in \mathbb{N}$.  The network parameters of all layers are collectively denoted by
\begin{eqnarray*}
\xi:=\left(W_{K}, b_{K}, W_{K-1}, b_{K-1}, \cdots, W_{1}, b_{1}\right).
\end{eqnarray*}
In Figure \ref{DNN}, we give a simple architectures
 of fully connected neural networks, where $z=(x_1,x_2,\cdots, x_d)$ is d-dimensional input variables, and the neural networks function is given as $s_\xi(z)=y_{NN}$.
\begin{figure}[H]
 \centering
 \includegraphics[width=0.6\textwidth,height=0.2\textheight]{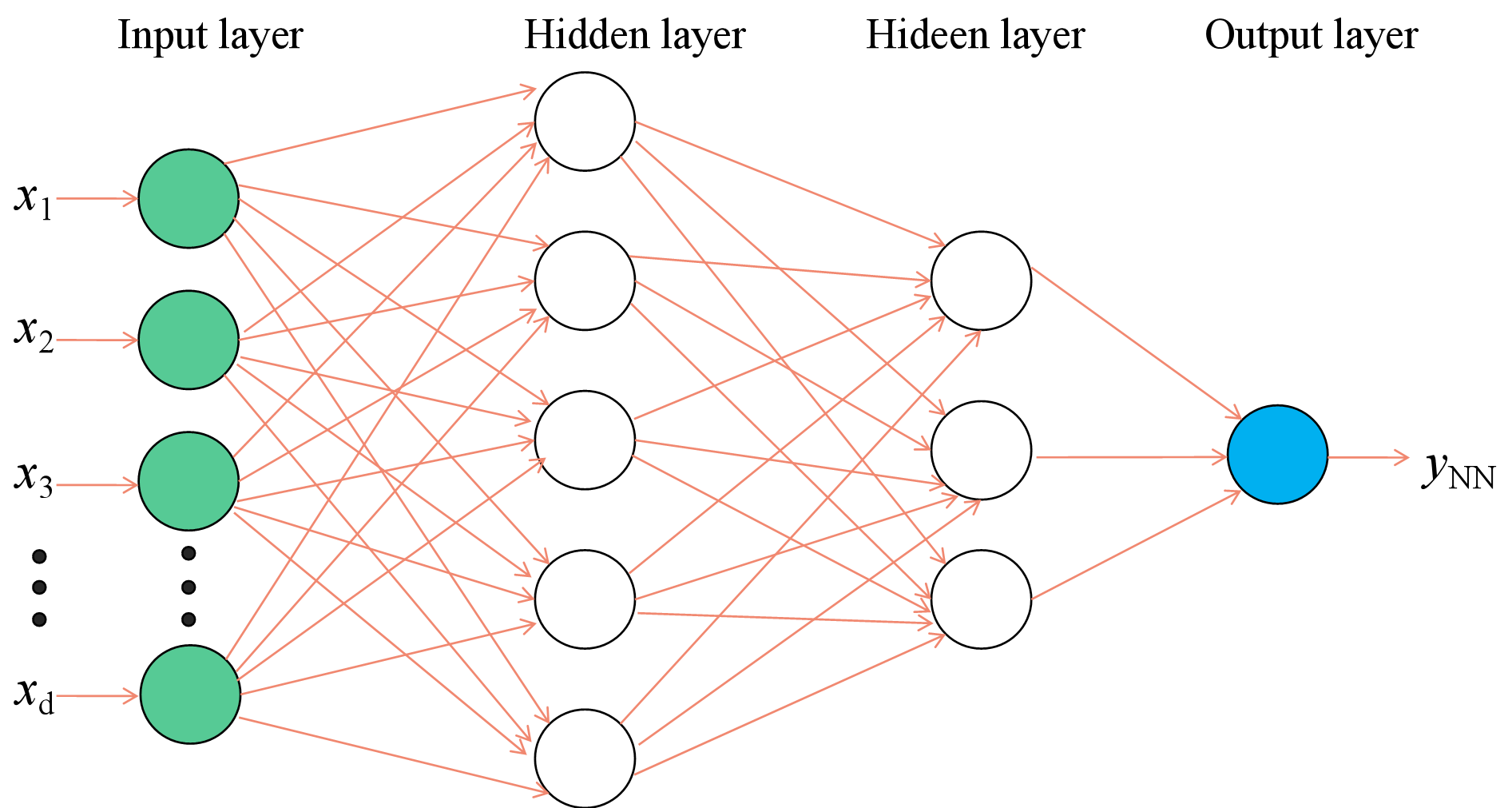}
 \caption
 {The fully connected neural networks.}\label{DNN}
 \end{figure}

\subsection{Mollification.}
In the practical applications of inverse problems, the noise of the measurements is unavoidable. The noisy data will make the residuals uncontrollable, which can be seen in the next section. Hence, we choose to mollify the measured data beforehand. The next is the introduction of mollification.

Fix one function $\rho\in C^2(\mathbb R)$ as
$$\text{supp}\rho=(0,1), \; \rho(0)=\rho(1)=\rho'(0)=\rho'(1)=0,$$
and
\begin{eqnarray*}
\int_0^{\infty} \rho(t) t^{d-1}\ dt=\frac{1}{\pi_d},
\end{eqnarray*}
with $\pi_d$ is the surface area of unit sphere $B(0,1)$ in $R^d$.
Set $\rho_\epsilon(x):=\epsilon^{-d}\rho(x/\epsilon)$, and define the mollifier as
\begin{equation}\label{mollifier}
 G_\epsilon \psi=\int_{|x-y|\le \epsilon} \rho_\epsilon(|x-y|)\psi(y)\ dy.
\end{equation}
Then we have
$$\int_{\mathbb R^d} \rho_\epsilon(|x-y|)\ dy=1. $$
In the next lemma, we concern with the estimate of $\Delta\varphi-\Delta G_\epsilon(\varphi^\delta)$.

\begin{lemma}\label{molli-lemma}
 Assume that the noisy data $\varphi^\delta\in L^\infty(\Omega)$ and the exact data $u(x,T):=\varphi(x)\in H^2(\Omega)$ satisfy
 \begin{eqnarray*}
\|\varphi-\varphi^\delta\|_{L^\infty(\Omega)}\le\delta.
\end{eqnarray*}
Also, the exact data imposes the high-order Lipschitz continuous condition. More precisely, we can find a positive constant $C_{\varphi}$ such that
\begin{align*}
 |\varphi(x)-\varphi(y)|&\leq C_{\varphi}|y-x|,\\
 |\Delta\varphi(x)-\Delta\varphi(y)|&\leq C_{\varphi}|y-x|,
\end{align*}
for $x,y\in \overline{\Omega}$ uniformly. For the mollification operator \eqref{mollifier}, if we pick $\epsilon=O(\delta^{1/3})$, then we can achieve the following optimal error bound
\begin{eqnarray*}
\|\Delta\varphi-\Delta G_\epsilon( \varphi^\delta)\|_{L^\infty(\Omega)}\le C\delta^{1/3}.
\end{eqnarray*}
\end{lemma}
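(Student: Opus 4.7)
The plan is to bound the error by the classical split into a smoothing bias term and a noise amplification term, then balance $\epsilon$ against $\delta$. Specifically, first I would write
\[
\Delta\varphi - \Delta G_\epsilon(\varphi^\delta) \;=\; \bigl[\Delta\varphi - \Delta G_\epsilon(\varphi)\bigr] \;+\; \Delta G_\epsilon(\varphi - \varphi^\delta),
\]
and estimate each bracket separately in $L^\infty(\Omega)$.

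For the first bracket (the deterministic approximation error), I would exploit the vanishing conditions $\rho(0)=\rho(1)=\rho'(0)=\rho'(1)=0$: they guarantee that $\rho_\epsilon$ and $\nabla\rho_\epsilon$ vanish on $\partial B(0,\epsilon)$, so integration by parts transfers the two derivatives from $x$ through $\rho_\epsilon(|x-y|)$ onto $y$ and then onto $\varphi$, yielding $\Delta G_\epsilon(\varphi)(x) = G_\epsilon(\Delta\varphi)(x)$ whenever $\varphi\in H^2$. Using the normalization $\int\rho_\epsilon(|x-y|)\,dy=1$ and the hypothesis that $\Delta\varphi$ is $C_\varphi$-Lipschitz, I bound
\[
|\Delta\varphi(x)-G_\epsilon(\Delta\varphi)(x)| \le \int_{|x-y|\le\epsilon}\rho_\epsilon(|x-y|)\,|\Delta\varphi(x)-\Delta\varphi(y)|\,dy \le C_\varphi\,\epsilon.
\]

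For the second bracket (the noise amplification), I would keep both derivatives on the mollifier, since only $L^\infty$ control of $\varphi-\varphi^\delta$ is available:
\[
|\Delta G_\epsilon(\varphi-\varphi^\delta)(x)| \le \|\varphi-\varphi^\delta\|_{L^\infty(\Omega)} \int_{|x-y|\le\epsilon} \bigl|\Delta_x\rho_\epsilon(|x-y|)\bigr|\,dy.
\]
A change of variables $z=(y-x)/\epsilon$ gives $\int|\Delta_x\rho_\epsilon(|x-y|)|\,dy = \epsilon^{-2}\int|\Delta\rho(|z|)|\,dz = C\epsilon^{-2}$, so this term is bounded by $C\delta\epsilon^{-2}$.

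Combining, $\|\Delta\varphi-\Delta G_\epsilon(\varphi^\delta)\|_{L^\infty(\Omega)} \le C(\epsilon + \delta\epsilon^{-2})$, which is minimized by the balance $\epsilon\sim\delta^{1/3}$, producing the optimal rate $C\delta^{1/3}$. The main obstacle I anticipate is the boundary issue: when $x$ is close to $\partial\Omega$ the ball $B(x,\epsilon)$ may leave $\Omega$, so the integration by parts and the scaling identity require either extending $\varphi,\varphi^\delta$ past $\partial\Omega$ (preserving the Lipschitz bound on $\Delta\varphi$ and the $\delta$-closeness) or restricting the estimate to an interior subdomain; once this extension is fixed the remaining computations are routine scaling arguments.
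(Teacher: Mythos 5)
Your proposal is correct and follows essentially the same route as the paper: the same split into a smoothing-bias term (handled by commuting $\Delta$ with $G_\epsilon$ and the Lipschitz bound on $\Delta\varphi$, giving $C\epsilon$) and a noise-amplification term (handled by putting the Laplacian on the kernel and the scaling $\int|\Delta\rho_\epsilon|\,dy\le C\epsilon^{-2}$, giving $C\delta\epsilon^{-2}$), followed by the balance $\epsilon\sim\delta^{1/3}$. Your remark about the boundary of $\Omega$ is a legitimate point that the paper's proof also leaves implicit, but it does not change the argument.
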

\begin{proof}
 We split the subtraction $\Delta\varphi-\Delta G_\epsilon( \varphi^\delta)$ as following:
\begin{equation*}
 \Delta\varphi-\Delta G_\epsilon( \varphi^\delta)
 = (\Delta\varphi-G_\epsilon(\Delta \varphi))+(G_\epsilon(\Delta \varphi)-\Delta G_\epsilon( \varphi^\delta))=:I_1+I_2.
\end{equation*}
For $I_1$, we have that
\begin{equation*}
 |I_1|\le \int_{|x-y|\le \epsilon}\rho_\epsilon(|x-y|)~|\Delta\varphi(x)-\Delta\varphi(y)|\ dy
 \le C\epsilon.
\end{equation*}
For $I_2$, Green's identities and the properties of the kernel function $\rho$ give that
$\Delta G_\epsilon( \varphi)=G_\epsilon( \Delta\varphi)$. Hence,
\begin{equation*}
\begin{aligned}
 I_2&=\Delta\Big[ \int_{R^d}\rho_\epsilon(|x-y|)~(\varphi(y)-\varphi^\delta(y))  \ dy\Big]\\
 &=\int_{R^d}\Delta\rho_\epsilon(|x-y|)~(\varphi(y)-\varphi^\delta(y))\ dy.
 \end{aligned}
\end{equation*}
From the straightforward calculation, we can deduce that
\begin{equation*}
 \Delta\rho_\epsilon(|x-y|)=\epsilon^{-d-2} \rho''(|x-y|/\epsilon)+(d-1)\epsilon^{-d-1}|x-y|^{-1}\rho'(|x-y|/\epsilon),
\end{equation*}
which gives
\begin{equation*}
 |I_2|\le \delta \int_{|x-y|\le \epsilon}|\Delta\rho_\epsilon(|x-y|)|\ dy
 \le C \delta \epsilon^{-2}.
\end{equation*}
So we have
\begin{equation*}
 | \Delta\varphi-\Delta G_\epsilon(\varphi^\delta)|\le C\epsilon(1+\delta\epsilon^{-3}).
\end{equation*}
By picking $\epsilon=O(\delta^{1/3})$, we can achieve the desired estimate and complete the proof.
\end{proof}

\section{Uniqueness.}\label{sec_uni}
The uniqueness of this inverse potential problem is one of our main results. In this section, we will prove the uniqueness and the proof relies on the conditional stability of the inverse source problem of equation \eqref{eq1}. The conditional stability will be stated in the next subsection.

\subsection{Conditional stability of the inverse source problem.}
Under the framework of DNNs, the total error of the reconstructed solution depends on the training error and the measurement error. This connection relies on the conditional stability of linear inverse source problem, i.e., the quantitative dependence of the unknown source on the measurement data. Sequentially, here we will introduce some known results for the linear inverse source problem.

The mathematical statement of inverse source problem in parabolic equations with final time data is given below. For the parabolic equation
\begin{equation}\label{eqzz1}
 \begin{cases}
 \begin{aligned}
(\partial_t-\Delta +\overline{q}(x))v(x,t)&=p(x)h(x,t), &&(x,t)\in \Omega_T,\\
v(x,t)&=0, &&(x,t)\in \partial\Omega_T,\\
v(x,0)&=0, &&x\in\Omega,
\end{aligned}
\end{cases}
\end{equation}
we set $\overline q\ge 0$ and $\overline q\in L^\infty (\Omega)$, and $h(x,t)$ is given.
Then the inverse source problem is to use the measurement
\begin{eqnarray}\label{eqzz2}
\overline{\varphi}(x):=v[p](x,T)
\end{eqnarray}
to recover the unknown $p(x)$ in the source term.

Recalling the norm of the classical Sobolev space $W^{2,1}_2(\Omega_T)$ as
\begin{eqnarray*}
\|u\|_{W^{2,1}_2(\Omega_T)}=\sqrt{\sum\limits_{|\alpha|\leq 2}\|D^{\alpha}u\|^2_{L^2(\Omega_T)}+\|u_t\|^2_{L^2(\Omega_T)}},
\end{eqnarray*}
the following classical result on the inverse source problem \eqref{eqzz1}-\eqref{eqzz2} can be found in \cite{Prilepko}.
\begin{lemma}\label{lem3-0}
For equation \eqref{eqzz1}, we assume that
$$h\in L^\infty(\Omega_T),\ h_t\in L^\infty(\Omega_T),\ p(x)\in L^2(\Omega),\ ph\in L^2(\Omega_T),\ ph_t \in L^2(\Omega_T),$$
and
$$h(x,t)\ge 0,\ h_t(x,t)\geq 0\ \text{on}\ \Omega_T,\ |h(x,T)|\ge \nu >0\ \text{on}\ \Omega.$$
Here $\nu$ is a fixed positive number. Then, for known $\overline{q}\in L^\infty(\Omega)$ and input data $\overline{\varphi}\in H^2(\Omega)$, there exists a unique solution $(v(x,t), p(x)) \in W^{2,1}_2(\Omega_T)\times L^2(\Omega)$ to \eqref{eqzz1}-\eqref{eqzz2},
following the estimate
\begin{eqnarray*}\label{eqzz3}
\|p\|_{L^2(\Omega)}+\|v\|_{W^{2,1}_2(\Omega_T)}\leq C \|(-\Delta+\overline{q})\overline{\varphi}\|_{L^2(\Omega)}.
\end{eqnarray*}
The constant $C$ depends on $\norm{\overline q}_{L^\infty(\Omega)}$, $\nu,$ $\Omega$ and $T$.
\end{lemma}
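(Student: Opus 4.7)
The plan is to follow the classical Prilepko strategy: convert the inverse source problem into a pointwise reconstruction formula at $t=T$ via the time derivative of $v$, and then close the loop by a Fredholm alternative in $L^2(\Omega)$.

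First, I set $w(x,t):=v_t(x,t)$ and differentiate \eqref{eqzz1} in $t$ (justified under the regularity assumed on $h, h_t$). Then $w$ satisfies the parabolic problem
\begin{equation*}
(\partial_t-\Delta+\overline{q})w = p(x)\, h_t(x,t), \quad w|_{\partial\Omega_T}=0, \quad w(x,0)=p(x)\, h(x,0),
\end{equation*}
where the initial condition is read off from the original PDE at $t=0$ using $v(\cdot,0)\equiv 0$. Evaluating \eqref{eqzz1} at $t=T$ and using $v(\cdot,T)=\overline{\varphi}$ yields the pointwise identity
\begin{equation*}
p(x)\, h(x,T) = w(x,T) + (-\Delta+\overline{q})\overline{\varphi}(x), \qquad x\in\Omega,
\end{equation*}
which, by $h(x,T)\geq\nu>0$, inverts into the reconstruction formula $p(x)=[w(x,T)+(-\Delta+\overline{q})\overline{\varphi}(x)]/h(x,T)$.

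Second, I would define $\mathcal{T}:L^2(\Omega)\to L^2(\Omega)$ by $\mathcal{T}p:=[w[p](\cdot,T)+(-\Delta+\overline{q})\overline{\varphi}]/h(\cdot,T)$, where $w[p]$ solves the $w$-problem above; then $p$ solves the inverse problem if and only if $\mathcal{T}p=p$. Standard $W^{2,1}_2$ parabolic estimates for the $w$-problem (with the non-zero initial datum $p\, h(\cdot,0)\in L^2(\Omega)$ handled by lifting or by Galerkin approximation), combined with the time-trace inequality, give $\|w[p](\cdot,T)\|_{L^2(\Omega)}\leq C\|p\|_{L^2(\Omega)}$ with a constant depending on $\|\overline{q}\|_{L^\infty}$, $\|h\|_{L^\infty}$, $\|h_t\|_{L^\infty}$, $\Omega$, $T$. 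Since $\overline{\varphi}\in H^2(\Omega)$ forces $(-\Delta+\overline{q})\overline{\varphi}\in L^2(\Omega)$, the map $\mathcal{T}$ is well-defined, and its $p$-dependent part $\mathcal{T}_0 p := w[p](\cdot,T)/h(\cdot,T)$ is compact on $L^2(\Omega)$ by the gained $W^{2,1}_2$ regularity and the Rellich--Kondrachov embedding.

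Third, uniqueness of the homogeneous problem ($\overline{\varphi}\equiv 0\Rightarrow p\equiv 0$) is needed to invoke the Fredholm alternative. When $\overline{\varphi}\equiv 0$, integrating $v_t=w$ in time against $v(\cdot,0)=v(\cdot,T)=0$ gives $\int_0^T w(x,t)\,dt=0$ for every $x$, while the reconstruction identity reduces to $w(\cdot,T)=p\, h(\cdot,T)$. A sign analysis of $w$ based on the parabolic maximum principle, exploiting $h\geq 0$ and $h_t\geq 0$ together with the splitting $p=p_+-p_-$, forces $w\equiv 0$ on $\Omega_T$; then $w(\cdot,T)=p\, h(\cdot,T)=0$ combined with $h(\cdot,T)\geq\nu>0$ yields $p=0$. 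The Fredholm alternative then produces a bounded inverse $(I-\mathcal{T}_0)^{-1}$, whence $\|p\|_{L^2(\Omega)}\leq C\|(-\Delta+\overline{q})\overline{\varphi}\|_{L^2(\Omega)}$; feeding $ph\in L^2(\Omega_T)$ back into the forward parabolic problem for $v$ gives the $W^{2,1}_2$ bound, completing the estimate. The main obstacle is this sign-based uniqueness step: because $p$ is not assumed to have a definite sign, the maximum principle cannot be applied to $w$ directly and must be coupled with the integral constraint $\int_0^T w\,dt=0$ to eliminate cancellations between positive and negative parts of $p$.
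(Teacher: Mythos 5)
The paper does not prove this lemma at all: it is imported verbatim from the cited reference of Prilepko et al., so there is no in-paper argument to compare against. Your plan reproduces the classical strategy of that reference (differentiate in time, read off $p\,h(\cdot,T)=w(\cdot,T)+(-\Delta+\overline q)\overline\varphi$ at $t=T$, set up a fixed-point equation $(I-\mathcal T_0)p=\psi$, and invoke the Fredholm alternative), and the skeleton is sound. The compactness of $\mathcal T_0$ and the closing $W^{2,1}_2$ bound for $v$ are routine, modulo the small imprecision that with only $L^2$ initial datum $p\,h(\cdot,0)$ the solution $w$ is not globally in $W^{2,1}_2(\Omega_T)$; what you actually use is parabolic smoothing to get $w(\cdot,T)\in H^1_0(\Omega)$ and then Rellich. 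You should also verify the ``only if'' direction of your fixed-point equivalence: $\mathcal T p=p$ gives $(-\Delta+\overline q)\bigl(v[p](\cdot,T)-\overline\varphi\bigr)=0$, and recovering $v[p](\cdot,T)=\overline\varphi$ needs $\overline q\ge 0$ and the compatibility $\overline\varphi|_{\partial\Omega}=0$.

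The genuine gap is the injectivity of $I-\mathcal T_0$, which you yourself flag as ``the main obstacle'' without resolving it; as written, the step ``a sign analysis of $w$ \dots forces $w\equiv 0$'' is an assertion, not an argument, and the ingredient you propose to couple it with ($\int_0^T w\,dt=0$) is only available for the full $w$, not for the signed pieces where the maximum principle applies. The classical way to close this is: split $p=p^+-p^-$, note that $\mathcal T_0$ preserves the positive cone (parabolic maximum principle with source $p^\pm h_t\ge 0$ and initial datum $p^\pm h(\cdot,0)\ge 0$), and deduce from $p=\mathcal T_0 p$ that $p^\pm\le \mathcal T_0 p^\pm$ pointwise. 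Then for $\phi\ge 0$ with $\phi\le\mathcal T_0\phi$, evaluate the equation for $V:=v[\phi]$ at $t=T$ to get $(-\Delta+\overline q)V(\cdot,T)=h(\cdot,T)(\phi-\mathcal T_0\phi)\le 0$ with $V(\cdot,T)=0$ on $\partial\Omega$; the \emph{elliptic} maximum principle (here is where $\overline q\ge 0$ is used) gives $V(\cdot,T)\le 0$, while the parabolic one gives $V\ge 0$, so $V(\cdot,T)=\int_0^T w[\phi]\,dt=0$ with $w[\phi]\ge 0$, forcing $w[\phi]\equiv 0$ and hence $\phi=\mathcal T_0\phi=0$. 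Applied to $\phi=p^+$ and $\phi=p^-$ this yields $p=0$. Without this elliptic step your maximum-principle argument cannot rule out cancellation between $p^+$ and $p^-$, so the uniqueness — and with it the Fredholm conclusion and the final estimate — remains unproved in your write-up.
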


\subsection{Uniqueness theorem.}
Now it is time to show the uniqueness theorem. First we introduce the admissible set for the unknown potential $q(x)$ as
$$\mathcal{A}:=\{\psi\in L^\infty(\Omega): 0\le \psi(x)\le M \text{ a.e. on }\Omega\}\subset L^2(\Omega).$$
The constant $M$ is the given upper bound of the admissible set. Next, recalling equation \eqref{eq1}, we collect some restrictions on the controllable source $F(x,t)$, initial status $u_0(x)$ and boundary condition $b(x,t)$.

\begin{assumption}\label{assumption}
 The assumptions on $F(x,t)$, $u_0(x)$ and $b(x,t)$ are given as follows.
\begin{itemize}
 \item $u_0(x)\in H^2(\Omega)$, $u_0(x)=b(x,0)$ on $\partial\Omega$, $\exists \nu>0$ such that  $u_0(x)\geq \nu >0$ on $\Omega$;
 \item $b\in H^2(\partial\Omega)$, $b\ge \nu>0$ on $\partial\Omega$, $b_t \geq 0$ on $\partial\Omega$;
 \item $F\in L^2(\Omega_T)$, $F_t\in L^2(\Omega_T)$, $F\ge 0$ on $\Omega_T$, $F_t\ge 0$ on $\Omega_T$;
 \item $\Delta u_0(x)-Mu_0(x)+F(x,0)\ge 0$ on $\Omega$.
\end{itemize}
\end{assumption}

\begin{theorem}\label{thm2}
Under Assumption \ref{assumption}, the inverse problem \eqref{eq1}-\eqref{eq3} has at most one solution in $W^{2,1}_2(\Omega_T)\times\mathcal A$.
\end{theorem}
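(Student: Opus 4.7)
The plan is to reduce the inverse potential problem to the linear inverse source problem from Lemma \ref{lem3-0}. Suppose $(u_1,q_1)$ and $(u_2,q_2)$ are two solutions in $W^{2,1}_2(\Omega_T)\times\mathcal A$ sharing the same final time measurement $\varphi(x)$. Setting $w:=u_1-u_2$ and subtracting the two copies of equation \eqref{eq1}, a short manipulation yields
\begin{equation*}
(\partial_t-\Delta+q_1)w = (q_2-q_1)\,u_2 \quad\text{on } \Omega_T,
\end{equation*}
together with $w=0$ on $\partial\Omega_T$, $w(\cdot,0)=0$ on $\Omega$, and crucially $w(\cdot,T)=\varphi-\varphi=0$ on $\Omega$. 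This matches the structure of \eqref{eqzz1}--\eqref{eqzz2} with the identifications $\overline q=q_1$, $p=q_2-q_1$, $h=u_2$, and $\overline{\varphi}\equiv 0$. If Lemma \ref{lem3-0} applies, then the estimate $\|p\|_{L^2(\Omega)}\le C\|(-\Delta+q_1)\cdot 0\|_{L^2(\Omega)}=0$ forces $q_1=q_2$ a.e., completing the proof.

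The nontrivial step is verifying that $h=u_2$ meets the hypotheses of Lemma \ref{lem3-0}: $u_2\ge 0$ and $(u_2)_t\ge 0$ on $\Omega_T$, and $u_2(\cdot,T)\ge\nu>0$ on $\Omega$. This is where Assumption \ref{assumption} is used in full. First, the parabolic maximum principle, applied to \eqref{eq1} with $q_2\ge 0$, $F\ge 0$, $u_0\ge\nu$ and $b\ge\nu$, gives $u_2\ge 0$. To obtain the monotonicity in $t$, I would differentiate \eqref{eq1} in time and set $v:=\partial_t u_2$, which satisfies
\begin{equation*}
(\partial_t-\Delta+q_2)v=F_t\ge 0,\qquad v|_{\partial\Omega_T}=b_t\ge 0,
\end{equation*}
with initial datum $v(x,0)=\Delta u_0(x)-q_2(x)u_0(x)+F(x,0)$. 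Since $0\le q_2\le M$ and $u_0\ge 0$, one has $q_2 u_0\le M u_0$, hence $v(\cdot,0)\ge \Delta u_0-Mu_0+F(\cdot,0)\ge 0$ by the last bullet of Assumption \ref{assumption}. A second application of the maximum principle gives $v=(u_2)_t\ge 0$. Combined with $u_2(x,0)=u_0(x)\ge\nu$, this yields $u_2(x,T)\ge\nu>0$ on $\Omega$.

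Regularity-wise, the required $u_2,(u_2)_t\in L^\infty(\Omega_T)$ follows from the solvability framework behind \eqref{eq1} together with the $H^2$ and compatibility conditions on $(u_0,b)$ and the $L^2$/non-negativity conditions on $F,F_t$; I would cite the standard parabolic well-posedness theory (e.g., \cite{Evans}) for this. Once $h=u_2$ is shown to satisfy all the hypotheses of Lemma \ref{lem3-0} with the same $\nu$ from Assumption \ref{assumption}, the lemma applies to the source problem for $w$ with zero data and immediately delivers $p\equiv 0$, i.e., $q_1=q_2$ in $L^\infty(\Omega)$.

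The main obstacle I expect is the bookkeeping for the time-differentiated equation: one must justify $\partial_t u_2\in W^{2,1}_2(\Omega_T)$ (or at least enough regularity to run the maximum principle on $v$) and verify the compatibility of $v$ on the parabolic boundary. Everything else is a direct application of the non-negativity assumptions and Lemma \ref{lem3-0}.
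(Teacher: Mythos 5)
Your proposal is correct and follows essentially the same route as the paper's proof: reduce to the linear inverse source problem for $w=u_1-u_2$ with zero data, verify the hypotheses of Lemma \ref{lem3-0} for $h=u_2$ via two applications of the maximum principle (to $u_2$ and to $\partial_t u_2$, using the last bullet of Assumption \ref{assumption} together with $0\le q_2\le M$ and $u_0\ge 0$), and conclude $q_1=q_2$. The only cosmetic difference is that the paper cites a specific lemma from \cite{Zhangliu} for the $L^\infty$ bounds on $u[q_2]$ and $u_t[q_2]$, where you appeal to standard parabolic regularity.
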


\begin{proof}
Assume that there are two distinct pairs $(u[q_1], q_1)$ and $(u[q_2], q_2)$ satisfying \eqref{eq1}-\eqref{eq3} with same data
$$u[q_1](x,T)=u[q_2](x,T)=\varphi(x).$$
Setting $$w(x,t):=u[q_1](x,t)-u[q_2](x,t), \quad \overline q(x):=q_2(x)-q_1(x),$$
then $w(x,t)$ meets the system
\begin{equation}\label{dnn-uni1}
\begin{cases}
\begin{aligned}
(\partial_t-\Delta +q_1(x))w(x,t)&=\overline q(x)u[q_2](x,t), &&(x,t)\in \Omega_T,\\
w(x,t)&=0, &&(x,t)\in \partial\Omega_T,\\
w(x,0)&=0, &&x\in\Omega,
\end{aligned}
\end{cases}
\end{equation}
with
\begin{equation}\label{dnn-uni2}
w(x,T) = 0,  \quad  x\in\Omega.
\end{equation}
We need to prove that
$$(w(x,t), \overline q(x))=(0,0)\ \text{in}\ W^{2,1}_2(\Omega_T)\times L^\infty (\Omega).$$

Obviously $\overline q(x)\in L^2(\Omega)$. Also there holds $u[q_2]\in L^\infty(\Omega_T)$ and
$u_t[q_2]\in L^\infty(\Omega_T)$ by \cite[Lemma 2.1]{Zhangliu}. Then we have
$$\overline q u[q_2]\in L^2(\Omega_T),\quad \overline q u_t[q_2]\in L^2(\Omega_T).$$
Under Assumption \ref{assumption} and the maximum principle, we can see that $u[q_2]\geq 0$ on   $\Omega_T$. For $u_t[q_2]$, with Assumption \ref{assumption} and equation \eqref{eq1}, it satisfies
\begin{equation*}
\begin{cases}
\begin{aligned}
(\partial_t -\Delta +q_2(x)) (u_t[q_2])&=F_t(x,t)\ge0, &&(x,t)\in \Omega_T,\\
u_t[q_2](x,t)&=b_t(x,t)\ge 0, &&(x,t)\in\partial\Omega_T,\\
u_t[q_2](x,0)&=\Delta u_0(x)-q_2u_0(x)+F(x,0)\ge 0, &&x\in\Omega.
\end{aligned}
\end{cases}
\end{equation*}
Then the maximum principle leads to $u_t[q_2]\ge 0$ straightforwardly. With the positivity of $u_t[q_2]$, we derive that
$$u[q_2](x,t)=u_0(x)+\int_0^t \partial_s u[q_2](x,s) ds \geq u[q_2](x,0)\geq \nu >0, \ (x,t)\in \Omega_T,$$
which yields $u[q_2](x,T)\geq \nu>0$.
Now the conditions of Lemma \ref{lem3-0} are satisfied, and we conclude $(w(x,t),\overline  q(x))=(0,0)$ by applying Lemma \ref{lem3-0} on \eqref{dnn-uni1}-\eqref{dnn-uni2}. The proof is complete.
\end{proof}

\section{Generalization error estimates.}\label{sec_main}

In this section, we will discuss the error estimate of our approach for the inverse potential problem. Firstly, we introduce the corresponding residuals and define the loss function.

\subsection{Loss function and training errors.}\label{Sec2-1}
We propose a formulation of loss function for data-driven solutions of inverse problems, which can ensure the accuracy with the conditional stability of the given linear inverse source  problem. To achieve it, we define suitable residuals that measure the errors of the governed system and the input data.

Assume that the activation function is of $C^2$ regularity for the neural network $u_{\theta}$ defined by \eqref{DeepNN}, which leads to $u_{\theta} \in H^{2}(\overline\Omega\times [0, T])$. For the network parameters
$$\theta\in\Theta:=\{(W_{k}, b_{k})\}_{k=1}^{K} :W_{k} \in \mathbb{R}^{d_{k} \times d_{k-1}}, b_{k} \in \mathbb{R}^{d_{k}}\},$$
the set of all possible trainable parameters $u_{\theta}(x,t)$ up to its second order weak derivatives are bounded in $\overline\Omega\times [0,T]$ for any specific $\theta$. Similarly, noticing that $q_\eta(x)$ is the parametric neural network to approximate the potential function $q(x)$, we assume the activation function for the neural network $q_\eta(x)$ is of $L^\infty$ regularity such that $q_\eta(x)\in L^\infty(\Omega)$. We define
\begin{itemize}
\item Interior PDE residual
\begin{eqnarray*}\label{res1}
\mathcal{R}_{{int},\theta,\eta}(x, t):=\partial_{t} u_{\theta}(x, t)-\Delta u_{\theta}(x, t)+ q_\eta(x)u_{\theta}(x, t)-F(x,t), \quad (x,t) \in \Omega_T.
\end{eqnarray*}
\item Spatial boundary residual
\begin{eqnarray*}\label{res2}
\mathcal{R}_{sb,\theta}(x, t):=u_{\theta}(x, t)-b(x,t), \quad (x,t) \in \partial\Omega_T.
\end{eqnarray*}
\item Initial status residual
\begin{eqnarray*}\label{res3}
\mathcal{R}_{tb, \theta}(x):=u_{\theta}(x, 0)- u_0(x), \quad  x \in \Omega.
\end{eqnarray*}
\item Data residual
\begin{eqnarray}\label{res4}
\mathcal{R}_{d, \theta}(x):=u_{\theta}(x, T)- G_\epsilon\varphi^\delta(x), \quad  x \in \Omega.
\end{eqnarray}
\end{itemize}

Note that in the data residual \eqref{res4}, we use the mollified data  $G_\epsilon\varphi^\delta(x)$ instead of the noisy data $\varphi^\delta(x)$. A loss function minimization scheme for data-driven inverse problems seeks to minimize these residuals  comprehensively with some weights balancing different residuals. The loss function is defined as follows:
\begin{equation}\label{Loss1}
\begin{aligned}
J_\lambda(\theta,\eta)
=& \|q_\eta \mathcal{R}_{d,\theta}\|^2_{L^2(\Omega)} +\|\Delta\mathcal{R}_{d,\theta}\|^2_{L^2(\Omega)}
+ \lambda \|\mathcal{R}_{{int},\theta,\eta}\|^2_{H^1(0,T;L^2(\Omega))} \\
&
+\|\mathcal{R}_{{tb},\theta}\|^2_{L^{2}(\Omega)}
+\|q_\eta\mathcal{R}_{{tb},\theta}\|^2_{L^{2}(\Omega)}
+\|\Delta\mathcal{R}_{{tb},\theta}\|^2_{L^{2}(\Omega)}+\|\mathcal{R}_{{sb},\theta}\|^2_{H^2(0,T;L^2(\partial\Omega))},\\
\end{aligned}
\end{equation}
where $\lambda$ is a hyper-parameter to balance the residuals between the knowledge of PDE and the measurements. The proposed loss function (\ref{Loss1}) includes derivative penalties on the residuals. This is motivated by the conditional stability result for linear inverse source problem, which requires higher regularity on the measurement data $u(\cdot,T)$ (see Lemma \ref{lem3-0}). To improve the regularity of the noisy measurement data, we employ the mollification method by applying the mollification operator $G_\varepsilon$ on the noisy data $\varphi^\delta$. The design of the loss function for inverse problems distinguishes itself from that for forward problems such as physics-informed neural networks. The smoothness requirements not only ensure the existence of forward problem solutions, but also ensure the well-posedness of the inverse problem within the optimization framework.

\begin{remark}
The following standard loss function
\begin{eqnarray}\label{Loss-general}
 J^s(\theta,\eta)
=  \|\mathcal{R}_{d,\theta}\|_{L^2(\Omega)}^2
+\lambda \|\mathcal{R}_{{int},\theta,\eta}\|_{L^2(\Omega_T)}^2+
 \|\mathcal{R}_{{tb},\theta}\|_{L^{2}(\Omega)}^2
+\|\mathcal{R}_{{sb},\theta}\|_{L^2(\partial\Omega_T)}^2
\end{eqnarray}
has often been used in the literature.
For example, the DGM workflow adopts this form of loss function and minimizes it by least squares scheme \cite{Sirignano-DGM}.

\end{remark}

To determine $(\theta,\eta)$ from the discrete training set, accurate numerical evaluation of the integrals in (\ref{Loss1}) is essential. We introduce the following training sets that facilitate efficient computation of the integrals, leading to better performance:
\begin{align*}
\mathcal{S}_{d}&:=\left\{(x_n,T): x_n\in \Omega,\quad n=1,2,\cdots,N_{d} \right\},\nonumber \\
\mathcal{S}_{int}&:=\left\{(\widetilde{x}_{n},\widetilde{t}_{n}): (\widetilde{x}_{n},\widetilde{t}_{n})\in \Omega_T, \quad n=1,2,\cdots,N_{int}\right\}, \nonumber \\
\mathcal{S}_{tb}&:=\left\{(\overline{x}_{n},0): \overline{x}_{n}\in \Omega, \quad n=1,2,\cdots,N_{tb}\right\},\\
\mathcal{S}_{sb}&:=\left\{(\widehat{x}_{n},\widehat{t}_{n}): (\widehat{x}_{n},\widehat{t}_{n})\in \partial\Omega_T,\quad n=1,2,\cdots,N_{sb} \right\}.\nonumber
\end{align*}
Applying these sets and the numerical quadrature rules \cite{Mishra2}, we get the following empirical loss function
\begin{equation}\label{Loss1-1}
\begin{aligned}
 J_\lambda^N(\theta,\eta)
&=\sum_{n=1}^{N_{d}}\omega_n^{d,0}|q_\eta(x_n)\mathcal{R}_{d,\theta}(x_n)|^2+
\sum_{n=1}^{N_{d}}\omega_n^{d,1}|\Delta\mathcal{R}_{d,\theta}(x_n)|^2\\
&\quad+\lambda \sum_{n=1}^{N_{int}}\omega_n^{int,0}|\mathcal{R}_{{int},\theta,\eta}(\widetilde{x}_{n}, \widetilde{t}_{n})|^2
+\lambda \sum_{n=1}^{N_{int}}\omega_n^{int,1}|\partial_t\mathcal{R}_{{int},\theta,\eta}(\widetilde{x}_{n}, \widetilde{t}_{n})|^2 \\
&\quad+\sum_{n=1}^{N_{tb}}\omega_n^{tb,0}|\mathcal{R}_{tb,\theta}(\overline{x}_{n})|^2
+ \sum_{n=1}^{N_{tb}}\omega_n^{tb,1}|q_\eta(\overline{x}_n)\mathcal{R}_{tb,\theta}(\overline{x}_{n})|^2
+\sum_{n=1}^{N_{tb}}\omega_n^{tb,2}|\Delta\mathcal{R}_{tb,\theta}(\overline{x}_{n})|^2\\
&\quad+\sum_{n=1}^{N_{sb}}\omega_n^{sb,0}|\mathcal{R}_{sb,\theta}(\widehat{x}_{n}, \widehat{t}_{n})|^2
+\sum_{n=1}^{N_{sb}}\omega_n^{sb,1}|\partial_t\mathcal{R}_{sb,\theta}(\widehat{x}_{n}, \widehat{t}_{n})|^2 \sum_{n=1}^{N_{sb}}\omega_n^{sb,2}|\partial_t^2\mathcal{R}_{sb,\theta}(\widehat{x}_{n}, \widehat{t}_{n})|^2, \quad\quad\quad
\end{aligned}
\end{equation}
where the coefficients
$$\omega^{d,k}_n,\ \omega^{int,k}_n,\ \omega^{tb,j}_n,\ \omega^{sb,j}_n,\ k=0,1,\ j=0,1,2$$
are the quadrature weights. It is easy to see that the error for the loss function is
\begin{eqnarray}\label{Loss-error}
|J_\lambda(\theta,\eta)-J_\lambda^N(\theta,\eta)|
\le C\min\{ N_{d}^{-\alpha_{d,k}},N_{int}^{-\alpha_{int,k}},N_{tb}^{-\alpha_{tb,j}},
N_{sb}^{-\alpha_{sb,j}}:k=0,1,\ j=0,1,2\},\quad
\end{eqnarray}
where $C$ depends on the continuous norm $\|\cdot\|_{C(\Omega)}$ of the integrals, the rate $\alpha^{d,k}$, $ \alpha^{int,k}$, $\alpha^{tb,j}$, $\alpha^{sb,j}$ ($k=0,1,\ j=0,1,2$) are positive and depend on the regularity of the underlying integrand i.e, on the space $C(\Omega)$. Therefore, the underlying solutions and neural networks should be sufficiently regular such that the residuals can be approximated to a high accuracy by the quadrature rule.

Now, we define the generalization errors as
\begin{eqnarray}\label{err1}
\begin{cases}
\mathcal{E}_{G,q}:=\left\|q-q^*\right\|_{L^2(\Omega)},\\
\mathcal{E}_{G,u}:=\left\|u-u^{*}\right\|_{C\left([0, T] ;
L^{2}(\Omega)\right)},
\end{cases}
\end{eqnarray}
where $u^{*}:=u_{\theta^{*}},\ q^*:=q_{\eta^*}$ with $(\theta^*,\eta^*)$ is the minimizer of the functional (\ref{Loss1-1}). Also, we estimate generalization errors in terms of the following  training errors:
\begin{itemize}
\item The measurement data training errors:
$\mathcal{E}_{T,d}:=\mathcal{E}_{T,d,0}+\mathcal{E}_{T,d,1}$, where
\begin{eqnarray}\label{training1}
\begin{cases}
\mathcal{E}_{T,d,0}:=\left(\sum_{n=1}^{N_{d}} \omega_{j}^{d,0}\left|
q_\eta\left(x_{n}\right)\mathcal{R}_{d,\theta^{*}}\left(x_{n}\right)\right|^{2}\right)^{\frac{1}{2}},\\
\mathcal{E}_{T,d,1}:=\left(\sum_{n=1}^{N_{d}} \omega_{j}^{d,1}\left|\Delta \mathcal{R}_{d,\theta^{*}}\left(x_{n}\right)\right|^{2}\right)^{\frac{1}{2}}.
\end{cases}
\end{eqnarray}
\item The interior PDE training errors: $\mathcal{E}_{T,int}:=\mathcal{E}_{T,int,0}+\mathcal{E}_{T,int,1}$, where
\begin{eqnarray}\label{training2}
\begin{cases}
\mathcal{E}_{T,int,0}:=\left(\sum_{n=1}^{N_{int}}\omega_n^{int,0}|\mathcal{R}_{{int},\theta^{*},\eta^*}(\widetilde{x}_{n}, \widetilde{t}_{n})|^2\right)^{\frac{1}{2}},\\
\mathcal{E}_{T,int,1}:=\left(\sum_{n=1}^{N_{int}}\omega_n^{int,1}|\partial_t\mathcal{R}_{{int},\theta^{*},\eta^*}(\widetilde{x}_{n}, \widetilde{t}_{n})|^2\right)^{\frac{1}{2}}.
\end{cases}
\end{eqnarray}
\item The initial condition training errors: $\mathcal{E}_{T,tb}:=\mathcal{E}_{T,tb,0}+\mathcal{E}_{T,tb,1}+\mathcal{E}_{T,tb,2}$, where
\begin{eqnarray}\label{training4}
\begin{cases}
\mathcal{E}_{T,tb,0}
:=\left(\sum_{n=1}^{N_{tb}}\omega_n^{tb,0}|\mathcal{R}_{tb,\theta^{*}}(\overline{x}_{n})|^2\right)^{\frac{1}{2}},
\\
\mathcal{E}_{T,tb,1}
:=\left(\sum_{n=1}^{N_{tb}}\omega_n^{tb,1}|q_\eta(\overline{x}_{n})\mathcal{R}_{tb,\theta^{*}}(\overline{x}_{n})|^2\right)^{\frac{1}{2}},
\\
\mathcal{E}_{T,tb,2}
:=\left(\sum_{n=1}^{N_{tb}}\omega_n^{tb,2}|\Delta\mathcal{R}_{tb,\theta^{*}}(\overline{x}_{n})|^2\right)^{\frac{1}{2}}.
\end{cases}
\end{eqnarray}
\item The spatial boundary condition training errors: $\mathcal{E}_{T,sb}:=\mathcal{E}_{T,sb,0}+\mathcal{E}_{T,sb,1}+\mathcal{E}_{T,sb,2}$, where
\begin{eqnarray}\label{training3}
\begin{cases}
\mathcal{E}_{T,sb,0}
:=\left(\sum_{n=1}^{N_{sb}}\omega_n^{sb,0}|\mathcal{R}_{sb,\theta^{*}}(\widehat{x}_{n}, \widehat{t}_{n})|^2\right)^{\frac{1}{2}},\\
\mathcal{E}_{T,sb,1}
:=\left(\sum_{n=1}^{N_{sb}}\omega_n^{sb,1}|\partial_t\mathcal{R}_{sb,\theta^{*}}(\widehat{x}_{n}, \widehat{t}_{n})|^2\right)^{\frac{1}{2}},\\
\mathcal{E}_{T,sb,2}
:=\left(\sum_{n=1}^{N_{sb}}\omega_n^{sb,2}|\partial_t^2\mathcal{R}_{sb,\theta^{*}}(\widehat{x}_{n}, \widehat{t}_{n})|^2\right)^{\frac{1}{2}}.
\end{cases}
\end{eqnarray}
\end{itemize}

\subsection{Proofs of the estimates.}
Now we can state the theorem about the generalization error estimates.
\begin{theorem}\label{main}
Recall the errors defined in \eqref{err1}-\eqref{training3}. Under Assumption \ref{assumption}, there exists a unique solution to the inverse problem \eqref{eq1}-\eqref{eq3}. Moreover, for the approximate solution $(u^*,q^*)$ of the inverse problem with $(\theta^*,\eta^*)$ being a global minimizer of the loss function  $J_\lambda^N(\theta,\eta)$, we have the following generalization error estimates
\begin{equation}\label{IPf}
 \begin{aligned}
  \mathcal{E}_{G,q}&\leq C\Big( \mathcal{E}_{T,d}+ \mathcal{E}_{T,int}
+ \mathcal{E}_{T,sb,1}
+ \mathcal{E}_{T,sb,2}
+ \mathcal{E}_{T,tb,1}
+ \mathcal{E}_{T,tb,2}
+C_{q}^{\frac{1}{2}} N^{\frac{-\alpha}{2}}
+O(\delta^{1/3})\Big),\\
\mathcal{E}_{G,u}&\leq
C\Big( \mathcal{E}_{T,d}+ \mathcal{E}_{T,int}
+ \mathcal{E}_{T,sb}
+ \mathcal{E}_{T,tb}
+C_{q}^{\frac{1}{2}} N^{\frac{-\alpha}{2}}
+O(\delta^{1/3})\Big),
 \end{aligned}
\end{equation}
where
\begin{align*}
N&=\min \left\{N_{d}, N_{int},N_{sb},N_{tb}\right\},\\
\alpha&=\min \left\{\alpha_{int,0},\alpha_{int,1},\alpha_{sb,0},\alpha_{sb,1},\alpha_{sb,2},\alpha_{tb,0},\alpha_{tb,1},\alpha_{d}\right\},\\
\end{align*}
in \eqref{Loss-error}, and
$$C_{q}=\max\left\{C_{q,0},C_{q,1},
C_{qs,0},C_{qs,1},C_{qs,2},C_{qt,0},C_{qt,1}, C_{qd} \right\},$$
with
\begin{align*}
&C_{qd}=C_{qd}(\|\mathcal{L^*}\mathcal{R}_{d,\theta^*}\|_{C(\Omega)}),
&&C_{q,0}=C_{q,0}(\|\mathcal{R}_{int,\theta^*,\eta^*}\|_{C(\Omega_T)}),\\
&C_{q,1}=C_{q,1}(\|\partial_t\mathcal{R}_{int,\theta^*,\eta^*}\|_{C(\Omega_T)}),
&&C_{qs,0}=C_{qs,0}(\|\mathcal{R}_{sb,\theta^*}\|_{C(\partial\Omega_T)}),\\
&C_{qs,1}=C_{qs,1}(\|\partial_t\mathcal{R}_{sb,\theta^*}\|_{C(\partial\Omega_T)}), &&C_{qs,2}=C_{qs,2}(\|\partial_t^2\mathcal{R}_{sb,\theta^*}\|_{C(\partial\Omega_T)}),\\
&C_{qt,0}=C_{qt,0}(\|\mathcal{R}_{tb,\theta^*}\|_{C(\Omega)}), &&C_{qt,1}=C_{qt,1}(\|\mathcal{L^*}\mathcal{R}_{tb,\theta^*}\|_{C(\Omega)}).
\end{align*}
The constant $C$ depends on $\|q^*\|_{L^\infty(\Omega)},$ $\Omega$ and $T$.
\end{theorem}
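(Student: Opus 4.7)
The plan is to reduce the potential-reconstruction error $\mathcal{E}_{G,q}$ to a linear inverse source problem for which the conditional stability of Lemma \ref{lem3-0} applies, and then absorb the mollification error through Lemma \ref{molli-lemma} and the quadrature error through \eqref{Loss-error}. Setting $w:=u-u^{*}$ and $\tilde q:=q-q^{*}$, the algebraic identity $qu-q^{*}u^{*}=q^{*}w+\tilde q\,u$ converts \eqref{eq1} minus its residual form for $u^{*}$ into $(\partial_{t}-\Delta+q^{*})w=-\tilde q\,u-\mathcal{R}_{int,\theta^{*},\eta^{*}}$ with lateral boundary datum $-\mathcal{R}_{sb,\theta^{*}}$, initial datum $-\mathcal{R}_{tb,\theta^{*}}$, and terminal value $\varphi-G_{\epsilon}\varphi^{\delta}-\mathcal{R}_{d,\theta^{*}}$. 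Here the true solution $u$ plays the role of the admissible factor $h$ in Lemma \ref{lem3-0}, and its positivity properties $u,u_{t}\ge 0$ and $u(\cdot,T)\ge \nu$, already established in the proof of Theorem \ref{thm2}, guarantee that the lemma's hypotheses are met.

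First I would split $w=y_{1}+y_{2}$, where $y_{1}$ carries all inhomogeneous initial/boundary data together with the source $-\mathcal{R}_{int,\theta^{*},\eta^{*}}$, and $y_{2}$ solves $(\partial_{t}-\Delta+q^{*})y_{2}=-\tilde q\,u$ with zero initial and lateral boundary data. Applying Lemma \ref{lem3-0} to $y_{2}$ yields $\|\tilde q\|_{L^{2}(\Omega)}\le C\|(-\Delta+q^{*})y_{2}(\cdot,T)\|_{L^{2}(\Omega)}$. Expanding $y_{2}(\cdot,T)=(\varphi-G_{\epsilon}\varphi^{\delta})-\mathcal{R}_{d,\theta^{*}}-y_{1}(\cdot,T)$, the right-hand side splits into three pieces: the first is controlled by $O(\delta^{1/3})$ via Lemma \ref{molli-lemma}; the second equals $-\Delta\mathcal{R}_{d,\theta^{*}}+q^{*}\mathcal{R}_{d,\theta^{*}}$, which is exactly what the two data terms $\|q_{\eta}\mathcal{R}_{d,\theta}\|_{L^{2}}^{2}+\|\Delta\mathcal{R}_{d,\theta}\|_{L^{2}}^{2}$ in \eqref{Loss1} bound by $\mathcal{E}_{T,d}$ up to quadrature error; the third requires $(-\Delta+q^{*})y_{1}(\cdot,T)$, and evaluating the PDE for $y_{1}$ at $t=T$ rewrites this as $-\partial_{t}y_{1}(\cdot,T)-\mathcal{R}_{int,\theta^{*},\eta^{*}}(\cdot,T)$.

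The analytic core is therefore controlling $\|\partial_{t}y_{1}(\cdot,T)\|_{L^{2}(\Omega)}$. Setting $z:=\partial_{t}y_{1}$ and differentiating the PDE for $y_{1}$ in $t$ gives a linear parabolic system with source $-\partial_{t}\mathcal{R}_{int,\theta^{*},\eta^{*}}$, lateral boundary data $-\partial_{t}\mathcal{R}_{sb,\theta^{*}}$, and initial datum $z(\cdot,0)=-\Delta\mathcal{R}_{tb,\theta^{*}}+q^{*}\mathcal{R}_{tb,\theta^{*}}-\mathcal{R}_{int,\theta^{*},\eta^{*}}(\cdot,0)$ read off from the PDE at $t=0$. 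A standard energy estimate for this system yields $\|z\|_{C([0,T];L^{2}(\Omega))}\lesssim \mathcal{E}_{T,tb,1}+\mathcal{E}_{T,tb,2}+\mathcal{E}_{T,sb,1}+\mathcal{E}_{T,sb,2}+\mathcal{E}_{T,int}$, where trace inequalities such as $\|\mathcal{R}_{int,\theta^{*},\eta^{*}}(\cdot,0)\|_{L^{2}(\Omega)}\lesssim\|\mathcal{R}_{int,\theta^{*},\eta^{*}}\|_{H^{1}(0,T;L^{2}(\Omega))}$ and the one-sided time-trace of $\partial_{t}\mathcal{R}_{sb,\theta^{*}}$ explain precisely why \eqref{Loss1} contains the $H^{1}$ penalty on $\mathcal{R}_{int}$, the $H^{2}(0,T;L^{2}(\partial\Omega))$ penalty on $\mathcal{R}_{sb}$, and the $\Delta$ and $q_{\eta}$ penalties on $\mathcal{R}_{tb}$. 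Replacing continuous $L^{2}$-norms by their quadrature approximations and invoking \eqref{Loss-error} produces the $C_{q}^{1/2}N^{-\alpha/2}$ correction and finishes the bound for $\mathcal{E}_{G,q}$.

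For $\mathcal{E}_{G,u}=\|w\|_{C([0,T];L^{2}(\Omega))}$, a direct energy estimate applied to the equation for $w$ combined with the just-obtained bound on $\|\tilde q\|_{L^{2}}$ and the $L^{\infty}$-bound on $u$ suffices; because pointwise time-traces at $t=T$ of $\partial_{t}y_{1}$ are no longer needed, the zeroth-order pieces $\mathcal{E}_{T,tb,0}$ and $\mathcal{E}_{T,sb,0}$ enter naturally, which accounts for the full $\mathcal{E}_{T,tb}$ and $\mathcal{E}_{T,sb}$ appearing in the $u$-estimate of \eqref{IPf}. The main obstacle I anticipate is the third step: propagating quantitative control from the differentiated residuals $\partial_{t}\mathcal{R}_{int}$, $\partial_{t}\mathcal{R}_{sb}$, $\partial_{t}^{2}\mathcal{R}_{sb}$ all the way to the pointwise time-trace $\partial_{t}y_{1}(\cdot,T)$ through a differentiated-equation energy argument whose required data norms match exactly the penalty terms of \eqref{Loss1}---this matching is what forces the unconventional structure of $J_{\lambda}$ and is the delicate part of the estimate.
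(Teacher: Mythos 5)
Your proposal is correct and follows essentially the same route as the paper's proof: the same decomposition of $u-u^*$ into a part driven by $(q-q^*)u$ with homogeneous data (to which Lemma \ref{lem3-0} is applied) and a part carrying the residuals, the same reduction of $\mathcal{L}^*y_1(\cdot,T)$ to $\partial_t y_1(\cdot,T)$ via the PDE at $t=T$, the same differentiated-equation energy estimate for $\partial_t y_1$, and the same use of Lemma \ref{molli-lemma} with $\epsilon=O(\delta^{1/3})$ and of \eqref{Loss-error} for the quadrature correction. The sign conventions differ but the argument is the paper's.
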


\begin{proof}
First, we introduce $\hat u:=u^*-u$ and realize that
\begin{equation*}\label{sensitivitypro}
\begin{cases}
\begin{aligned}
(\partial_t -\Delta +q^*(x))\hat u(x,t) &=\mathcal{R}_{{int},\theta^*,\eta^*}(x, t)+(q-q^*)u[q](x,t), &&(x,t)\in \Omega_T,\\
\hat u(x,t)&=\mathcal{R}_{sb,\theta^*}(x,t), &&(x,t)\in\partial\Omega_T,\\
\hat u(x,0)&=\mathcal{R}_{tb,\theta^*}(x), && x\in\Omega,
\end{aligned}
\end{cases}
\end{equation*}
with the final condition
\begin{eqnarray*}\label{sensitivityObs}
\hat u(x,T)=u[q^*](x,T)-u[q](x,T)=\mathcal{R}_{d,\theta^*}(x)-(\varphi-G_\epsilon\varphi^\delta).
\end{eqnarray*}
We make the decomposition $\hat u:=\hat u_1+\hat u_2$, where $\hat u_1,\ \hat u_2$ satisfy
\begin{equation}\label{sensitivitypro1}
\begin{cases}
\begin{aligned}
(\partial_t -\Delta +q^*(x))\hat u_1(x,t)&=(q^*-q)(x)u[q](x,t), &&(x,t)\in \Omega_T,\\
\hat u_1(x,t)&=0, &&(x,t)\in\partial\Omega_T,\\
\hat u_1(x,0)&=0, &&x\in\Omega,
\end{aligned}
\end{cases}
\end{equation}
with
\begin{eqnarray}\label{sensitivityObs1}
\hat u_1(x,T)=\mathcal{R}_{d,\theta^*}(x)-(\varphi(x)-G_\epsilon\varphi^\delta(x))-\hat u_2(x,T),
\end{eqnarray}
and
\begin{equation*}\label{sensitivitypro2}
\begin{cases}
\begin{aligned}
(\partial_t -\Delta + q^*(x))\hat u_2(x,t)&=\mathcal{R}_{{int},\theta^{*},\eta^*}(x, t), &&(x,t)\in \Omega_T,\\
\hat u_2(x,t)&=\mathcal{R}_{sb,\theta^*}(x,t), &&(x,t)\in\partial\Omega_T,\\
\hat u_2(x,0)&=\mathcal{R}_{tb,\theta^*}(x), &&x\in\Omega,
\end{aligned}
\end{cases}
\end{equation*}
respectively.

Define the operator $\mathcal L^*$ as $\mathcal L^* \psi= (-\Delta+q^*)\psi.$
With Assumption \ref{assumption}, we can apply Lemma \ref{lem3-0} to (\ref{sensitivitypro1})-(\ref{sensitivityObs1}) and deduce that
\begin{equation}\label{ff}
\begin{aligned}
&\|q^{*}-q\|_{L^2(\Omega)}\\
&\leq C\|\mathcal{L}^*\hat u_1(\cdot,T)\|_{L^2(\Omega)}\\
&= C\|\mathcal{L}^* \mathcal{R}_{d,\theta^*}-\mathcal{L}^*(\varphi-G_\epsilon\varphi^\delta)-\mathcal{L}^*\hat u_2(\cdot,T)\|_{L^2(\Omega)}\\
&= C\|\mathcal{L}^* \mathcal{R}_{d,\theta^*}+\Delta \varphi-\Delta G_\epsilon\varphi^\delta-q^*(\varphi-G_\epsilon\varphi^\delta)-\mathcal{L}^*\hat u_2(\cdot,T)\|_{L^2(\Omega)}\\
&\leq C \left(\|\mathcal{L}^*\mathcal{R}_{d,\theta^*}\|_{L^2(\Omega)}+\|\Delta \varphi-\Delta G_\epsilon\varphi^\delta\|_{L^2(\Omega)}+\|q^*(\varphi-G_\epsilon\varphi^\delta)\|_{L^2(\Omega)}+ \|\mathcal{L}^*\hat u_2(\cdot,T)\|_{L^2(\Omega)}\right),
\end{aligned}
\end{equation}
with $C=C(\|q^*\|_{L^\infty(\Omega)},\Omega,T)$. Using Lemma \ref{molli-lemma}, we get
\begin{equation}\label{inequ-1}
\|\Delta \varphi-\Delta G_\epsilon\varphi^\delta\|_{L^2(\Omega)}
\leq  C\epsilon(1+\delta\epsilon^{-3}).
\end{equation}
Also, we have
\begin{align*}
|(\varphi-G_\epsilon\varphi^\delta)|
&\le
\int_{|x-y|\le \epsilon}\rho_\epsilon(|x-y|)~|\varphi(x)-\varphi^\delta(y)|\;dy  \\
&\le\int_{|x-y|\le \epsilon} \rho_\epsilon(|x-y|)~|\varphi(x)-\varphi(y)|dy + \int_{|x-y|\le \epsilon} \rho_\epsilon(|x-y|)~|\varphi(y)-\varphi^\delta(y)|\;dy \\
& \le C\epsilon + \delta .
\end{align*}
Thus, there holds
\begin{equation}\label{inequ-2}
\|q^*(\varphi-G_\epsilon\varphi^\delta)\|_{L^2(\Omega)}
\leq C\epsilon + \delta.
\end{equation}
By straightforward computations, we have
\begin{align}
 \|\mathcal{L}^*\hat u_2(\cdot,T)\|_{L^2(\Omega)}
 &\leq \|\partial_t\hat u_2(\cdot,T)\|_{L^2(\Omega)}+
 \|\mathcal{R}_{{int},\theta^{*},\eta^*}(\cdot, T)\|_{L^2(\Omega)}\nonumber \\
 &\leq \|\partial_t\hat u_2\|_{L^\infty(0,T;L^2(\Omega))}+
 \|\mathcal{R}_{{int},\theta^{*},\eta^*}(\cdot, T)\|_{L^2(\Omega)}.
\end{align}
Setting $w(x,t):=\partial_t \hat u_2(x,t)$, it satisfies
\begin{equation}\label{sensitivitypro3}
\begin{cases}
\begin{aligned}
(\partial_t -\Delta +q^*)w(x,t)&=\partial_t\mathcal{R}_{{int},\theta^*,\eta^*}(x, t), &&(x,t)\in \Omega_T,\\
w(x,t)&=\partial_t \mathcal{R}_{sb,\theta^*}(x,t), &&(x,t)\in\partial\Omega_T,\\
w(x,0)&=\mathcal{R}_{{int},\theta^*,\eta^*}(x, 0)-\mathcal{L}^*\mathcal{R}_{tb,\theta^*}(x), &&x\in\Omega.
\end{aligned}
\end{cases}
\end{equation}
Using the regularity theory for the direct problem (\ref{sensitivitypro3}), we obtain
\begin{equation}\label{ff1}
\begin{aligned}
\|\partial_t\hat u_2\|_{L^\infty(0,T;L^2(\Omega))}&=\|w\|_{L^\infty (0,T;L^2(\Omega))} \\
&\leq C\big( \|\partial_t\mathcal{R}_{{int},\theta^*,\eta^*}\|_{L^2(\Omega_T)}
+\|\partial_t\mathcal{R}_{{sb},\theta^*}\|_{H^1(0,T;L^2(\partial\Omega))} \\
&\qquad\ +\|\mathcal{R}_{{int},\theta^*,\eta^*}(\cdot,0)\|_{L^{2}(\Omega)}
+\|\mathcal{L}^*\mathcal{R}_{tb,\theta^*}\|_{L^{2}(\Omega)}\big).
\end{aligned}
\end{equation}
Combining  \eqref{ff}-\eqref{ff1} together and using the Sobolev embedding theorem, we get
\begin{equation*}\label{ff3}
\begin{aligned}
\mathcal{E}_{G,q}
&=\|q^*-q\|_{L^2(\Omega)}\\
&\leq C\big(\|\mathcal{R}_{{int},\theta^*,\eta^*}\|_{H^1(0,T;L^2(\Omega))}
+\|\mathcal{L}^*\mathcal{R}_{d,\theta^*}\|_{L^2(\Omega)}+\|\mathcal{L}^*\mathcal{R}_{{tb},\theta^*}\|_{L^{2}(\Omega)} \\
&\ \qquad+\|\partial_t\mathcal{R}_{{sb},\theta^*}\|_{H^1(0,T;L^2(\partial\Omega))}
+\epsilon(1+\delta\epsilon^{-3})+\epsilon+\delta\big)\\
&\leq C
\big(\|\mathcal{R}_{{int},\theta^*,\eta^*}\|_{H^1(0,T;L^2(\Omega))}
+\|q^*\mathcal{R}_{d,\theta^*}\|_{L^2(\Omega)}
+\|\Delta\mathcal{R}_{d,\theta^*}\|_{L^2(\Omega)}+\|q^*\mathcal{R}_{{tb},\theta^*}\|_{L^{2}(\Omega)}\\
&\qquad\
+\|\Delta\mathcal{R}_{{tb},\theta^*}\|_{L^{2}(\Omega)}
+\|\partial_t\mathcal{R}_{{sb},\theta^*}\|_{H^1(0,T;L^2(\partial\Omega))}
+\epsilon(1+\delta\epsilon^{-3})+\epsilon+\delta\big),
\end{aligned}
\end{equation*}
with $C=C(\|q^*\|_{L^\infty(\Omega)},\Omega,T)>0$. Picking $\epsilon=O(\delta^{1/3})$, we achieve the estimate
\begin{equation}\label{ff3}
\begin{aligned}
\mathcal{E}_{G,q}
&=\|q^*-q\|_{L^2(\Omega)}\\
&\leq C
\big( \|\mathcal{R}_{{int},\theta^*,\eta^*}\|_{H^1(0,T;L^2(\Omega))}
+\|q^*\mathcal{R}_{d,\theta^*}\|_{L^2(\Omega)}
+\|\Delta\mathcal{R}_{d,\theta^*}\|_{L^2(\Omega)}\\
&\qquad\ +\|q^*\mathcal{R}_{{tb},\theta^*}\|_{L^{2}(\Omega)}
+\|\Delta\mathcal{R}_{{tb},\theta^*}\|_{L^{2}(\Omega)}
+\|\partial_t\mathcal{R}_{{sb},\theta^*}\|_{H^1(0,T;L^2(\partial\Omega))}
+O(\delta^{1/3})\big).
\end{aligned}
\end{equation}
Finally, we will evaluate the generalization error for the unknown $u(x,t)$ employing the obtained generalization error \eqref{ff3} of the potential function. From the classical regularity theory for PDE, if $F(x,t)$ is sufficiently smooth, then it holds that  $u\in{L^2(0,T;L^\infty(\Omega))}$. Consequently,
\begin{equation*}\label{uu3}
\begin{aligned}
 \mathcal{E}_{G,u}
&=\|\hat u\|_{C([0,T];L^2(\Omega))} \\
&\leq C\big(\|\mathcal{R}_{{int},\theta^*,\eta^*}+(q^*-q)u[q]\|_{L^2(\Omega_T)}+\|\mathcal{R}_{{sb},\theta^*}\|_{H^1(0,T;L^2(\partial\Omega))}
+ \|\mathcal{R}_{{tb},\theta^*}\|_{L^{2}(\Omega)}\big)\\
&\leq
C\big(\|\mathcal{R}_{{int},\theta^*,\eta^*}\|_{L^2(\Omega_T)}
+\|q^*-q\|_{L^2(\Omega)}\;\|u\|_{L^2(0,T;L^\infty(\Omega))}+\|\mathcal{R}_{{sb},\theta^*}\|_{H^1(0,T;L^2(\partial\Omega))}\\
&\qquad\ +\|\mathcal{R}_{{tb},\theta^*}\|_{L^{2}(\Omega)}\big) \\
&\leq C
\big(\|\mathcal{R}_{{int},\theta^*,\eta^*}\|_{H^1(0,T;L^2(\Omega))}
+\|q^*\mathcal{R}_{d,\theta^*}\|_{L^2(\Omega)}
+\|\Delta\mathcal{R}_{d,\theta^*}\|_{L^2(\Omega)}+\|\mathcal{R}_{{tb},\theta^*}\|_{L^{2}(\Omega)}\\
&\qquad\ +\|q^*\mathcal{R}_{{tb},\theta^*}\|_{L^{2}(\Omega)}+
\|\Delta\mathcal{R}_{{tb},\theta^*}\|_{L^{2}(\Omega)}+\|\mathcal{R}_{{sb},\theta^*}\|_{H^2(0,T;L^2(\partial\Omega))}+O(\delta^{1/3})\big).
\end{aligned}
\end{equation*}
The proof is complete.
\end{proof}

The estimate \eqref{IPf} demonstrates that well-trained neural networks will produce small generalization errors for the inverse problem. Specifically, when all components of the training errors, including the interior PDE errors, measurement errors as well as the initial and boundary value ones, are sufficiently small, and the training sampling is large enough, the generalization errors for inverse problem using neural networks can be limited well.
This differs from classical stability results that rely solely on the knowledge of data. In this work, the generalization error estimates reflect stability due to both the model itself and the reconstruction algorithm. From Theorem \ref{main}, we see that we can limit the errors of both the inverse and forward problems by controlling the residuals and the mollified parameter. This provides important insights into the mathematical properties of our approach and plays an important role on the construction of algorithms.

\section{Numerical reconstructions.}\label{sec_num}
\subsection{Reconstruction algorithm.}

The neural networks $u_\theta(x,t)$ and $q_\eta(x)$ depend on the parameters $\theta$ and $\eta$
describing the networks information for specific activation functions.
Within the standard paradigm of deep learning,
one trains the networks by finding the optimal parameters $(\theta^*,\eta^*)$
such that the loss function (\ref{Loss1-1}) is minimized.
Our target is the unknown solution of the inverse problem
\eqref{eq1}-\eqref{eq3} and we wish to find the
trainable parameters $(\theta^*, \eta^*)$ such that the corresponding neural networks
$(u_{\theta^*}, q_{\eta^*})$ approximate $(u,q)$ well.
More precisely, to solve \eqref{eq1}-\eqref{eq3}
we first parameterize $u$ and $q$ by deep neural networks $u_\theta$ and $q_\eta$
with network parameters $(\theta,\eta)$ respectively.
Then, we design an appropriate loss function, which is minimized to determine the parameters $(\theta,\eta)$. Finally, a gradient-based method is applied to alternately update the network parameters so that $(u_\theta, q_\eta)$ gradually approximates $(u,q)$ for our inverse problem.

We provide a schematic of the neural networks in Figure \ref{Schematic}.
The left part visualizes two unknowns as two standard neural networks parameterized by $\theta$ and $\eta$, respectively. The right part applies the given physical laws to the networks. B.C., I.C. and D are the boundary condition, initial status and the measurement data obtained from random sample points in training sets $\mathcal{S}_{sb}$, $\mathcal{S}_{tb}$ and $\mathcal{S}_{d}$ respectively. The training points in $\mathcal{S}_{int}$ are randomly sampled as the PDE residuals points in interior spatio-temporal domain. The loss function with some Sobolev norm is computed on the sample points, which can be done efficiently through automatic differentiation (AD) in case of derivative information. Minimizing the loss with respect to the parameters $(\theta, \eta)$ alternately produces $u_{\theta^\ast}$ and $q_{\eta^\ast}$, which serves as the  approximation to the solution of the inverse problem.

 \begin{figure}[h!]
 \centering
 \includegraphics[width=0.8\textwidth,height=0.2\textheight]{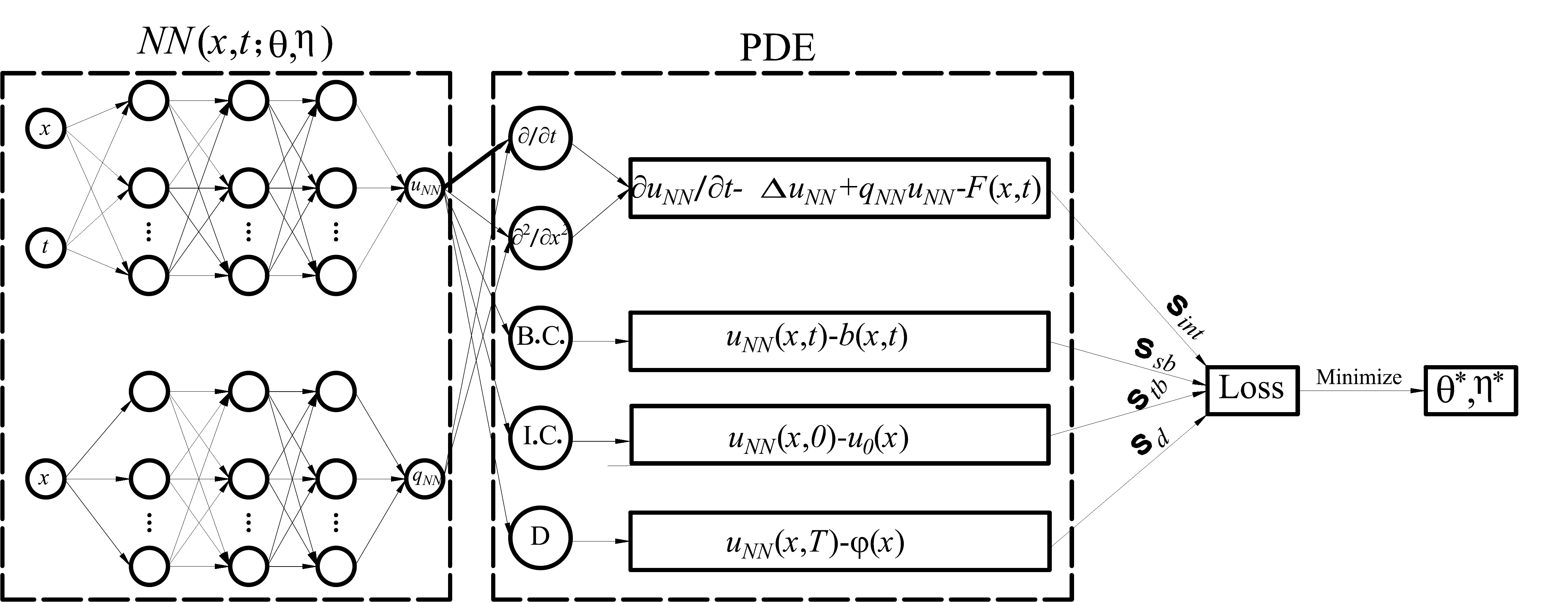}
 \caption
 {The schematic of the deep neural networks for solving the inverse potential problem.}\label{Schematic}
 \end{figure}

With the support of Theorem \ref{main}, we can construct the proposed Algorithm \ref{alg1} for solving the inverse problem \eqref{eq1}-\eqref{eq3}.

\begin{algorithm}[h!]
\caption{
  Data-driven solution of the inverse problem (\ref{eq1})-(\ref{eq3}).
}
\label{alg1}
\begin{algorithmic}
\REQUIRE
 Noisy data $\varphi^\delta$ for the inverse problem.

\STATE {\bf Initialize}
Network architectures $\left(q_{\eta},u_{\theta}\right)$ and parameters $(\eta,\theta)$.
\FOR {$j=1,\cdots,K$}
 \STATE
Sample
$\mathcal{S}_{d},\ \mathcal{S}_{int},\ \mathcal{S}_{sb},\ \mathcal{S}_{tb}$.
\STATE
$\eta\leftarrow \operatorname{Adam}\left(-\nabla_{\eta} J_{\lambda}^N(\eta, \theta), \tau_{\eta}, \lambda_{\eta}\right)$,\\
$\theta  \leftarrow \operatorname{Adam}\left(-\nabla_{\theta} J_{\lambda}^N(\eta, \theta), \tau_{\theta}, \lambda_{\theta}\right),$

\ENDFOR
\ENSURE
$\left(q_{\eta},u_{\theta}\right)$.
\end{algorithmic}
\end{algorithm}

 The above minimization problem is to search a minimizer of a possibly non-convex function
 $J_\lambda^N(\theta,\eta)$ over $\Theta\subset \mathbb{R}^\mathcal{M}$ for possibly very large $\mathcal{M}$.
 The hyper-parameters $(\tau_{\eta},\tau_{\theta})$ are learning rates and $(\lambda_{\eta},\lambda_{\theta})$
 are balance hyper-parameters between PDE and measurement data residuals.
The robust analysis for hyper-parameters $\lambda$ and the architectures of neural networks are studied in the next subsection.

The optimizer in Algorithm \ref{alg1} is Adam (Adaptive Moment Estimation), which is an optimization algorithm commonly used in deep learning for training neural networks. The key idea of Adam is to adaptively adjust the learning rate for each parameter based on estimates of both the first-order moment (the mean) and the second-order moment (the uncentered variance) of the gradients. This adaptation helps Adam to perform well in different types of optimization problems. The algorithm maintains an exponentially moving average of gradients ($m_t$) and squared gradients ($V_{t}$) for each parameter. At each iteration, Adam updates the parameters using a combination of these moving average estimates. It incorporates bias correction to account for the fact that the estimates are biased towards zero at the beginning of training.
We set $g_{t}$ be gradients w.r.t. stochastic objective at timestep $t$, $\beta_1, \beta_2\in[0,1)$ be the exponential decay rates for the moment estimates and $\tau$ be the initial learning rate. Good default settings for the tested machine learning problems are $\tau=0.001$, $\beta_1=0.9$, $\beta_2=0.999$ and fuzzy factor $\epsilon=10^{-8}$.
The updates are calculated as follows:
\begin{itemize}
 \item[(1)] Initialize the first moment vector $m$ and the second moment vector $V$ with zeros for each parameter:
$$m_{0}=V_{0}=0.$$
\item[(2)] Update the first moment estimate $m$ using a weighted average of the current gradient $g_{t}$ and the previous first moment estimate $m_{t-1}$:
$$ m_t=\beta_1m_{t-1}+(1-\beta_1)g_{t}.$$
\item[(3)] Update the second moment estimate $V$ using a weighted average of the squared gradients and the previous second moment estimate $V_{t-1}$:
$$V_{t}=\beta_{2}V_{t-1}+(1-\beta_{2})g_{t}^{2}.$$
\item[(4)] Calculate the bias-corrected first and second moment estimate
to correct for their initialization bias:
$${\hat{m}}_{t}={\frac{m_{t}}{1-(\beta_{1})^{t}}}, \quad
{\hat{V}}_{t}={\frac{V_{t}}{1-(\beta_{2})^{t}}}. $$
\item[(5)] Update the parameters $\xi$ by moving in the direction of the first moment estimate, where the learning rate is $\tau$ divided by the square root of the second moment estimate:
$$\xi_{t}=\xi_{t-1}-\tau\frac{\hat{m}_{t}}{\sqrt{\hat{V}_{t}}+\epsilon}.$$
\end{itemize}

The hyper-parameters in Adam include the learning rate and the decay rates for the moving averages. These hyper-parameters need to be tuned based on the specific problem and dataset to achieve optimal performance. Adam has several advantages that make it popular in deep learning:
\begin{itemize}
 \item [(a)] Adaptive learning rate: Adam automatically adapts the learning rate for each parameter based on the estimated first and second moments. This adaptive behavior helps in effectively navigating the optimization landscape and can lead to faster convergence.
 \item [(b)] Efficiency: Adam uses the moving averages to maintain a history of gradients, which eliminates the need to store and compute gradients for each iteration separately. This makes Adam memory-efficient and allows for efficient parallelization during training.
 \item [(c)] Robustness: Adam performs well across a wide range of optimization problems and is less sensitive to hyper-parameter tuning compared to some other optimizers. It can handle sparse gradients and noisy data effectively.
\end{itemize}

The proposed algorithm, which utilizes (\ref{Loss1}) as the loss function, exhibits superior performance in recovering smooth solutions due to the high regularity of the PDEs residual term. This regularity term promotes smoother solutions and is an important factor in achieving higher accuracy.
Furthermore, the use of automatic differentiation (AD) implementations enables the efficient calculation of the necessary derivatives. This feature is a significant advantage of our approach as it allows for the accurate optimization of the objective function, which is crucial for effective solution of inverse problems. To validate the effectiveness of the proposed algorithm and to substantiate our claims, we conduct a series of numerical experiments.

\subsection{Numerical experiments.}

In this subsection, we will present several numerical examples for the spatial domain $\Omega\subset \mathbb{R}^d$ with $d=2,3$.
We define the following relative errors for exact solutions $(u,q)$ and numerical approximations $(u^*,q^*)$ as
\begin{eqnarray*}
{Re}_u:=\frac{\|u-u^*\|_{L^2(\Omega_T)}}{\|u\|_{L^2(\Omega_T)}},\quad
{Re}_q:=\frac{\|q-q^*\|_{L^2(\Omega)}}{\|q\|_{L^2(\Omega)}},\quad
{Re}_{\Delta u}:=\frac{\|\Delta u-\Delta u^*\|_{L^2(\Omega_T)}}{\|\Delta u\|_{L^2(\Omega_T)}}.
\end{eqnarray*}

\paragraph{Example 1 (two-dimensional experiment):}
For equation \eqref{eq1}, we set the exact solution $u$ and the domain $\Omega_T$ as
$$ u(x,y,t)=(x^2+y^2+1)\exp(t), \quad (t,x,y)\in\Omega_T=[0,1]^3.$$
The exact potential $q$ is given as
\begin{equation*}
 q(x,y)=\sin(\pi x)\sin(\pi y).
 \end{equation*}
The initial and boundary conditions can be calculated from the representation of $u$ straightforwardly. The exact measurement will be
\begin{eqnarray*}\label{SS2}
u(x,y,1)=\varphi(x,y)=(x^2+y^2+1)\exp(1),
\end{eqnarray*}
and in our experiments the noisy data is set as
\begin{eqnarray}\label{random}
\varphi^\delta(x,y):=\varphi(x,y)+\delta\cdot (2\; \text{rand}(\text{shape}(\varphi(x,y)))-1),
\end{eqnarray}
where $\text{rand}(\text{shape}(\varphi))$ is a random variable generated by uniform distribution in $[0,1]$.

For the implementation details, we use a fully connected neural network for $u_{\theta}$ and $q_{\eta}$ with 3 hidden layers, each with a width of 20.
We take $$\text{N}=N_{int}+N_{sb}+N_{tb}+N_{d}=256+256\times4+256+256=1792$$ as the number of collocation points, which are randomly sampled in four different domains, i.e., interior spatio-temporal domain, spatial and temporal boundary domain, and additional measurement domain. The activation function is $\tanh(x)$ and the hyper-parameter is $\lambda=0.01$. The number of training epochs is set to be $5\times10^4$, and the initial learning rates $\tau_\theta, \tau_\eta$ both start with $0.001$ and shrink $10$ times every $2\times10^4$ iteration.
The test sets are chosen by a uniform mesh
\begin{eqnarray}\label{testset}
\mathcal{T}:=\{(t_k,x_i,y_j):  k,i,j=0,1,\cdots,49\}\subset \Omega_T.
\end{eqnarray}
Since the noisy level of the measurement data affects the reconstruction accuracy, in this simulation, we test the training performance for various noisy levels. Figure \ref{PINN_Training-2D-1} records the training process, i.e., the training loss, the relative error for the reconstruction of $q$ and the relative error for the recovery of $u$ with respect to the iterations for different noise levels $\delta=0.1\%,\;1\%,\;5\%,\;10\%$ by the proposed scheme.
After training, we test the reconstruction result on test sets $\mathcal{T}$.
The distribution of the temperature field $u(x,t)$ also depends on the time $t$, Figure \ref{PINN-time-2D-1} shows the time-series relative error of the recovered $u$ with various noise levels after logarithmic re-scaling. As shown in these figures, the training performance deteriorates as the noise level of the measurement data increasing.

\begin{figure}[h!]
\centering
\includegraphics[width=1.1\textwidth,height=0.2\textheight]{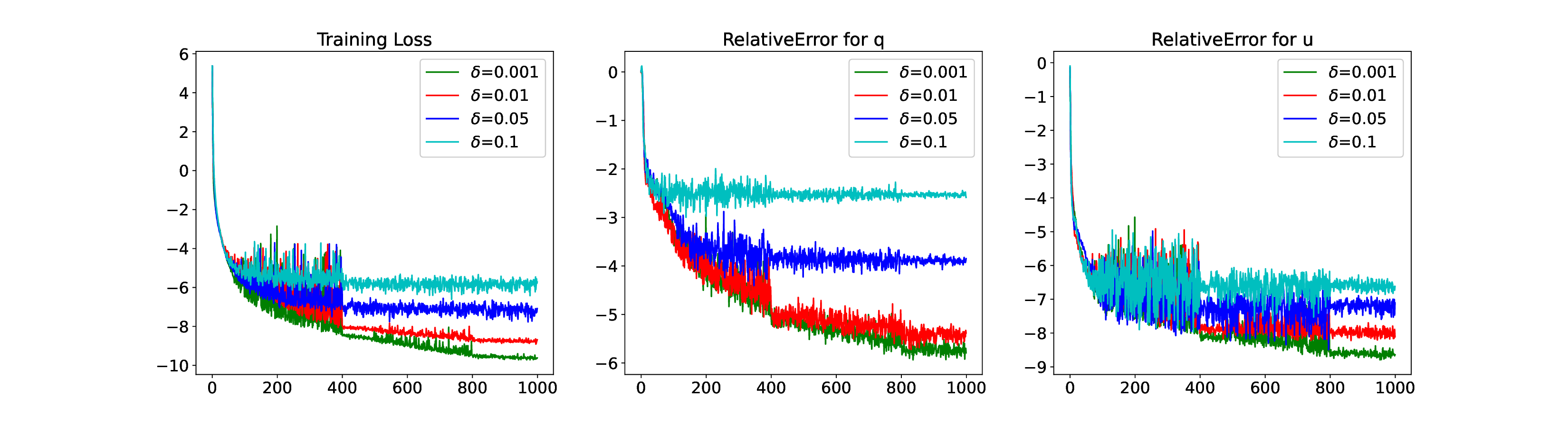}
\caption
{The training loss (left), the relative error for $q$ (middle), the relative error for $u$ (right) after logarithmic re-scaling.
}\label{PINN_Training-2D-1}
\end{figure}

\begin{figure}[h!]
\centering
\includegraphics[width=0.4\textwidth,height=0.2\textheight]{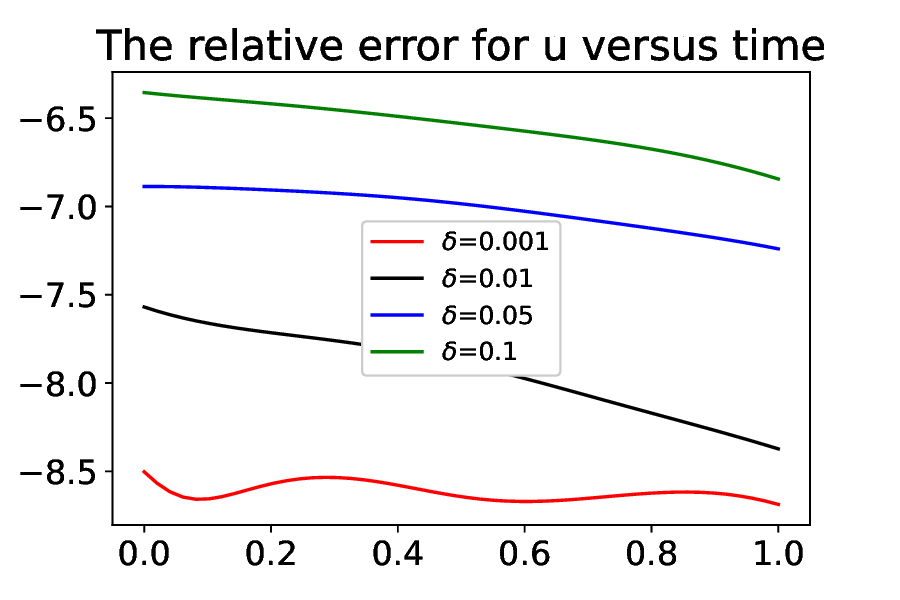}
\caption{The time series relative error (test) for $u$ for different noisy data.}\label{PINN-time-2D-1}
\end{figure}

Figure \ref{PINN-q-exact-2D-1} shows the exact solution of the ptential term. Figure \ref{PINN-q-2D-1} shows the reconstruction results for $q(x)$ by optimizing the proposed loss function (first line) and the corresponding absolute pointwise error for various noisy level $\delta=0.1\%,5\%,10\%$ (second line).
Meanwhile, Figure \ref{PINN-u-2D-1} presents the reconstruction solution $u$ (first line) and corresponding absolute pointwise error (second line) for various noisy level measurement data at $t=1/7$. We can see that the reconstruction accuracy for $q$ deteriorates as the noise level of the measurement data increasing, but the performance for $u$ is still satisfactory.

\begin{figure}[h!]
\centering
\includegraphics[width=0.4\textwidth,height=0.2\textheight]{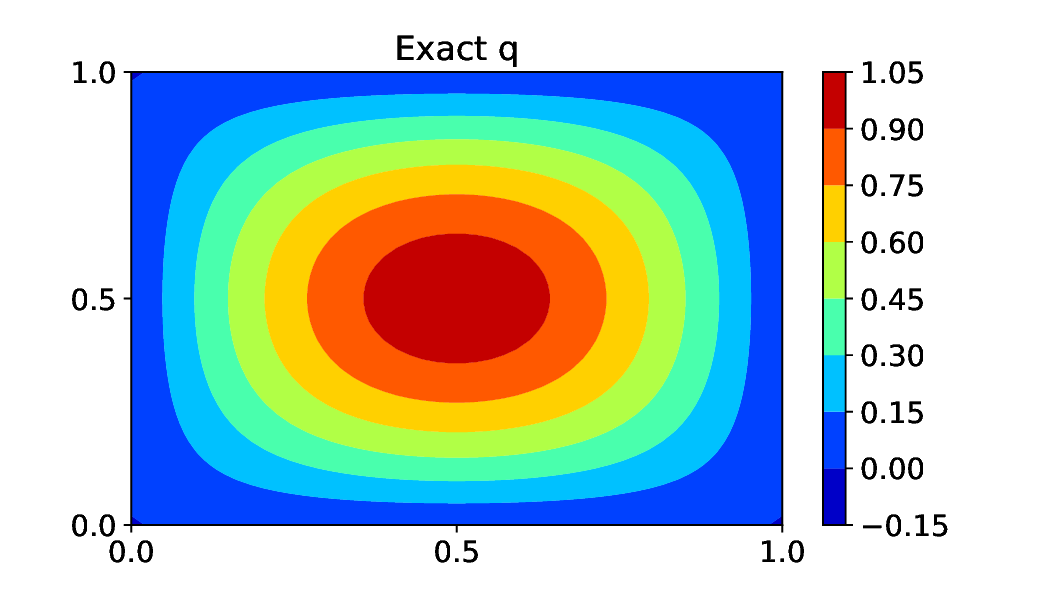}
\caption
{The exact potential function $q$.}\label{PINN-q-exact-2D-1}
\end{figure}

\begin{figure}[h!]
\centering
\includegraphics[width=1.1\textwidth,height=0.35\textheight,center]{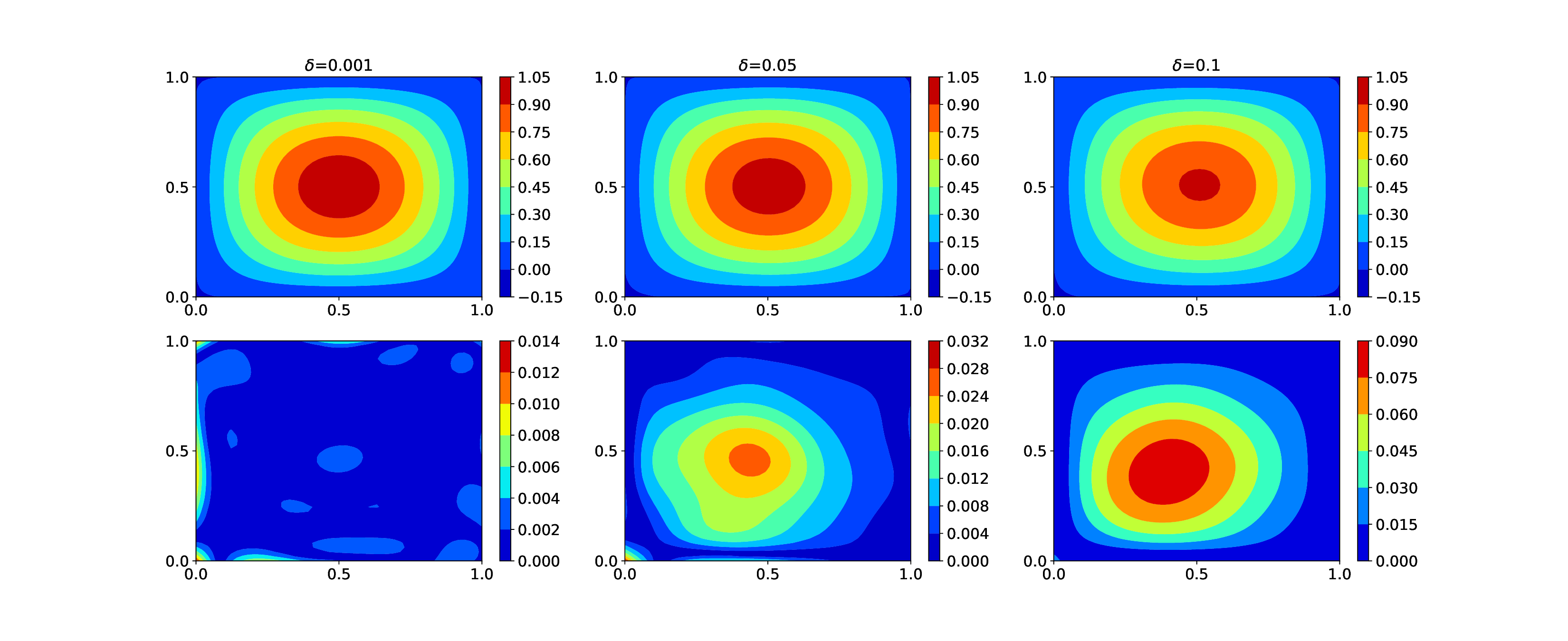}
\caption{The reconstruction of potential function $q$ (upper) and corresponding absolute pointwise error $|q-q^*|$ (bottom) for various noisy level measurement data.}\label{PINN-q-2D-1}
\end{figure}

\begin{figure}[h!]
\centering
\includegraphics[width=1.1\textwidth,height=0.35\textheight,center]{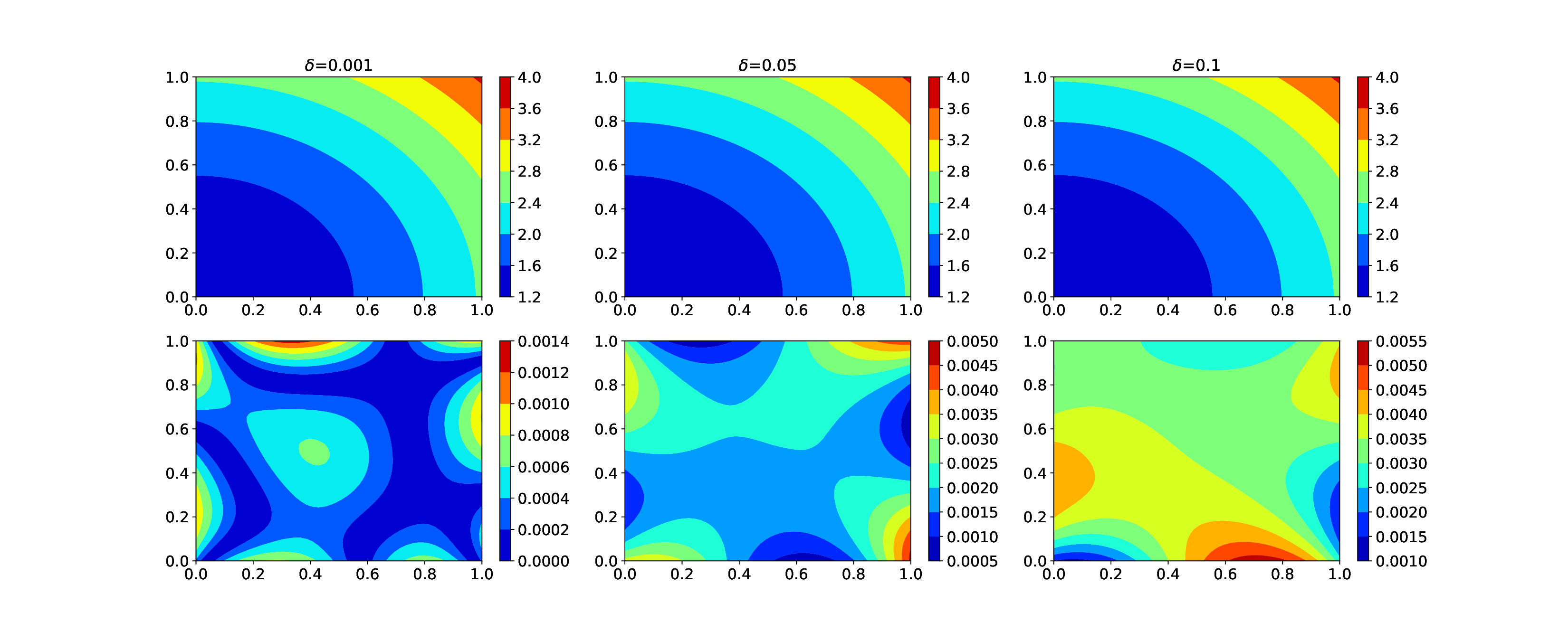}
\caption{The reconstruction of solution $u$ (upper) and corresponding absolute pointwise error $|u-u^*|$ (bottom) for various noisy level measurement data at $t=1/7$.}\label{PINN-u-2D-1}
\end{figure}

Table \ref{ex1-table} presents the recovery results solved by two schemes:
(I) the proposed frameworks with the loss function (\ref{Loss1}),
(II) the DGM frameworks with the loss function (\ref{Loss-general}).
We record the generalization error of $q$, $u$ and $\Delta u$ in $L^2$-error from the noisy input data with $\delta=0.01$. Due to the random sampling of the training data points, the inversion results have some stochasticity.
Thus, we perform Algorithm \ref{alg1} with the loss function in the formulation (\ref{Loss1}) and formulation (\ref{Loss-general}) five times, respectively.
The relative errors (mean and standard deviation) for the recovery of $q$, $u$ and $\Delta u$ are shown in Table \ref{ex1-table}. As observed, optimizing the loss function proposed in this paper leads to more accurate recovery results, especially for the reconstruction of $q$ compared with DGM frameworks. Moreover, although the reconstruction accuracy of $u$ in $L^2$-error for both two frameworks are relatively close, the accuracy of $\Delta u$ in $L^2$-error for proposed scheme in this paper performs better. This suggests that the proposed frameworks are better able to capture smooth solutions.

\begin{table}[h!]
\centering
\caption{
  The inversion results with noisy measuremrnt data ($\delta=0.01$) solved by two schemes: (I) the proposed frameworks with the loss function (\ref{Loss1}),
(II) the DGM frameworks with the loss function (\ref{Loss-general}).}
\label{ex1-table}
\tabcolsep0.04 in
\begin{tabular}{ccccccccc}
\specialrule{0.05em}{3pt}{3pt}
     &$Re_q$
 &$Re_u$ &$Re_{\Delta u}$ \\
 \specialrule{0.05em}{3pt}{3pt}
  $\text{I}$
  &$0.5011\% \pm 0.0102\%$  &$0.0215\% \pm 0.0049\%$
   & $0.2550\% \pm 0.0169\%$\\
\specialrule{0.05em}{3pt}{3pt}
  $\text{II}$

  &$12.4545\% \pm 4.8016\%$  &$0.1148\% \pm 0.0133  \%$   &$2.3468\% \pm 0.8133\%$\\
\specialrule{0.05em}{3pt}{3pt}
\end{tabular}
\end{table}

\paragraph{Example 2 (two-dimensional experiment):}
For equation \eqref{eq1}, we set the exact solution $u$ and the domain $\Omega_T$ as
$$ u(x,y,t)=t\exp(x+y), \quad (t,x,y)\in\Omega_T=[0,1]\times[0,2]^2.$$
The exact potential $q$ is given as
\begin{equation*}
 \begin{aligned}
  q(r)&=
\begin{cases}
15\left(\cos r-\sqrt{3}/2\right)+2, \quad 0\leq r \leq \pi/6, \\
2, \quad \text { otherwise },
\end{cases}\\
r(x,y)&=\sqrt{(x-1)^2+(y-1)^2}.
 \end{aligned}
\end{equation*}
The exact measurement will be
\begin{eqnarray*}\label{SS2}
u(x,y,1)=\varphi(x,y)=\exp(x+y),
\end{eqnarray*}
and the noisy data $\varphi^\delta$ is generated by \eqref{random}.
The network architectures and hyper-parameters such as activation function, balance hyper-parameter $\lambda$ are all the same as Example 1.
The number of training epochs is set to be $1\times10^5$, and the initial learning rates $\tau_\theta, \tau_\eta$ both start with $0.001$ and shrink $10$ times every $2\times10^4$ iteration. The test sets are chosen by a uniform mesh as \eqref{testset}

In this simulation, we evaluate the training performance under various levels of measurement noise. The training process under Algorithm \ref{alg1} is recorded in Figure \ref{PINN_Training-2D}, which includes the training loss, and the relative errors for the reconstruction of $q$ and $u$ during training process for different noise levels ($\delta=0,\ 1\%,\ 5\%,\ 10\%$). Figure \ref{PINN-q-exact-2D} displays the exact potential function $q$, while the approximated $q^*$ under different noise level measurements ($\delta=1\%,\ 5\%,\ 10\%$) are shown in Figure \ref{PINN-q-2D}. We can see that the numerical reconstructions still satisfy the theoretical results even with zero initial condition and nonsmooth exact potential. This means that in numerical reconstructions we may release the conditions of Assumption \ref{assumption} to some extent.

\begin{figure}[h!]
\centering
\includegraphics[width=1.1\textwidth,height=0.2\textheight]{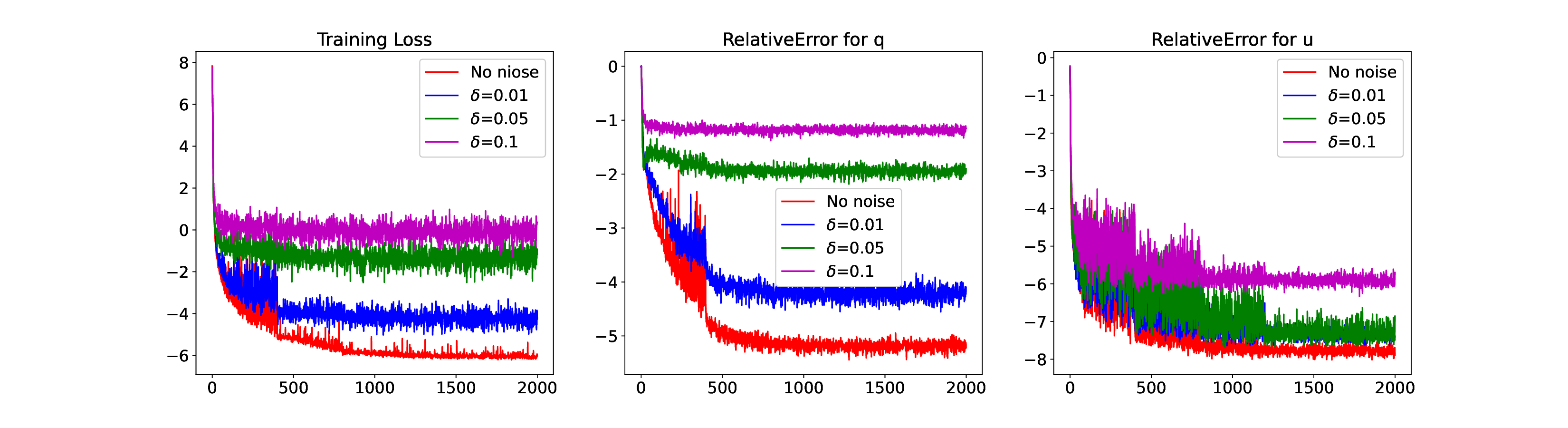}
\caption
{The training loss (left), the relative error for $q$ (middle), the relative error for $u$ (right) after logarithmic re-scaling.
}\label{PINN_Training-2D}
\end{figure}

\begin{figure}[h!]
\centering
\includegraphics[width=0.4\textwidth,height=0.2\textheight]{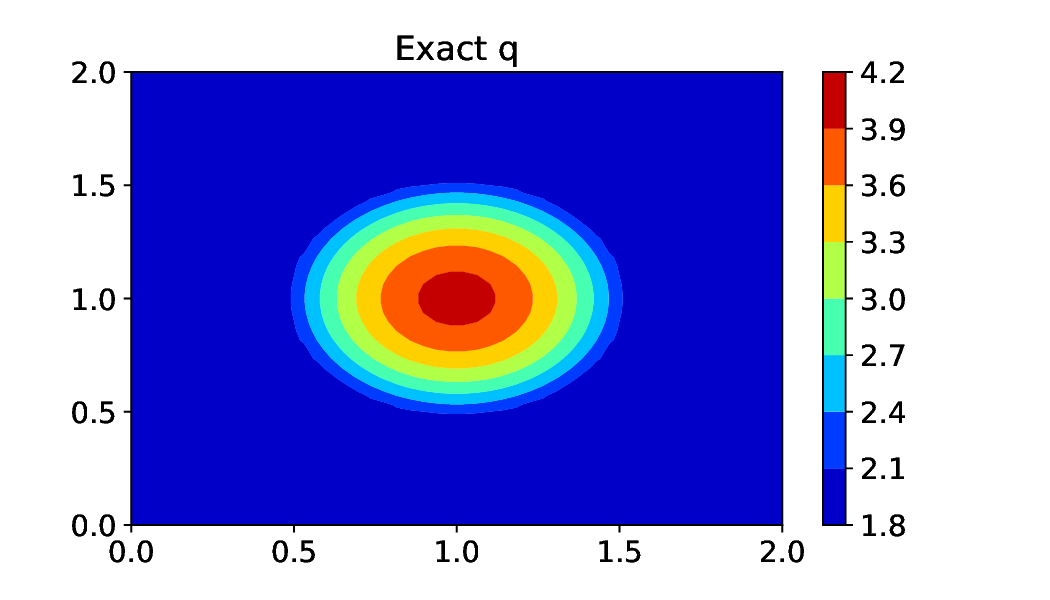}
\caption
{The exact potential function $q$.}\label{PINN-q-exact-2D}
\end{figure}

\begin{figure}[h!]
\centering
\includegraphics[width=1.1\textwidth,height=0.35\textheight,center]{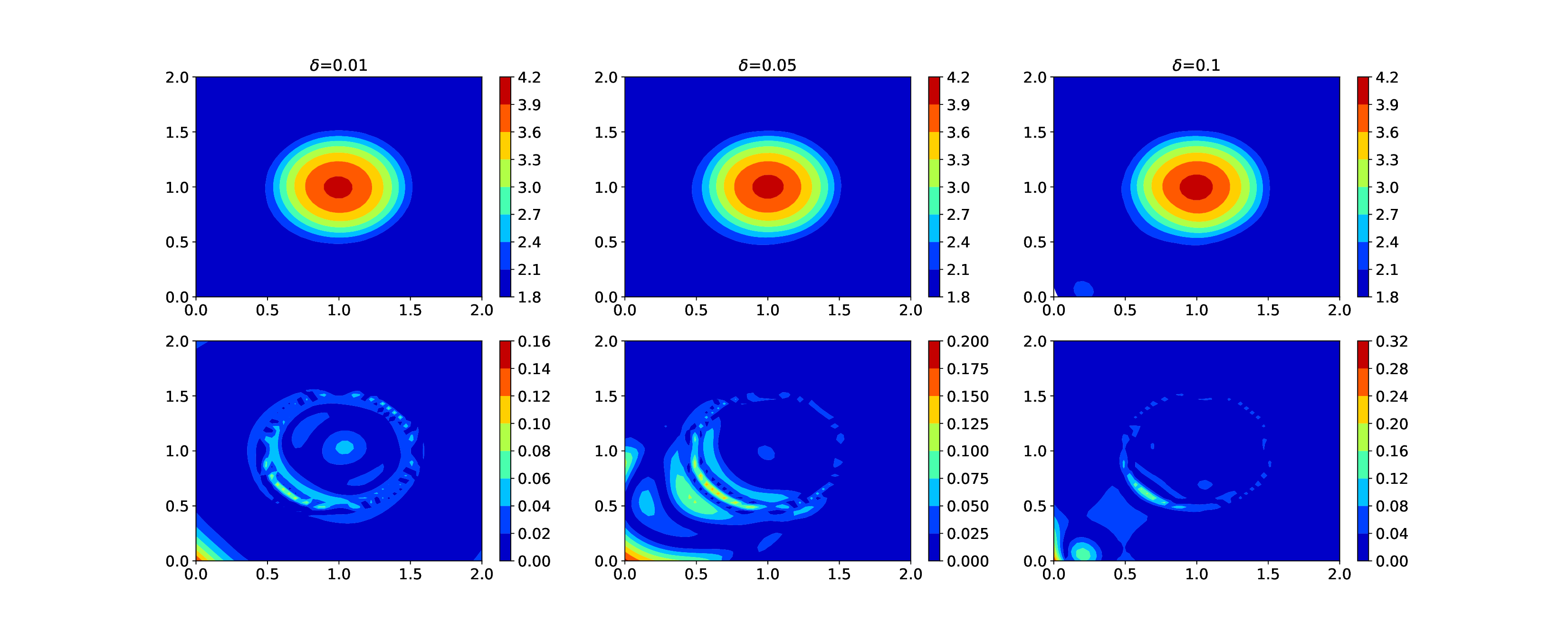}
\caption{The reconstruction of potential function $q$ (upper) and corresponding absolute pointwise error $|q-q^*|$ (bottom) for various noisy level measurement data.}\label{PINN-q-2D}
\end{figure}

Now, we start to verify the convergence of the iteration in Theorem \ref{main} with different neural network architectures. In the experiments, for a fixed number of per-layer neurons  $\text{NN}=20$, we compute the reconstruction errors for $q$ versus the noise level $\delta$ using logarithmic re-scaling with various hidden layers $\text{NL}=3, 4, 6$. The results of these experiments are presented in the left of Figure \ref{ErrorRate}. The theoretical estimate
$O(\delta^{1/3})$ is shown by the black line. Similarly, fixing hidden layer $\text{NL}=6$, the reconstruction errors for $q$ under various per-layer neurons $\text{NN}=10,15,20$ are given in the right of Figure \ref{ErrorRate}. From this figure, we see that the error could be bounded by the rate $\delta^{1/3}$ to some extent, which supports the theoretical analysis in Theorem \ref{main}.

\begin{figure}[h!]
\centering
\includegraphics[width=1.0\textwidth,height=0.25\textheight,center]{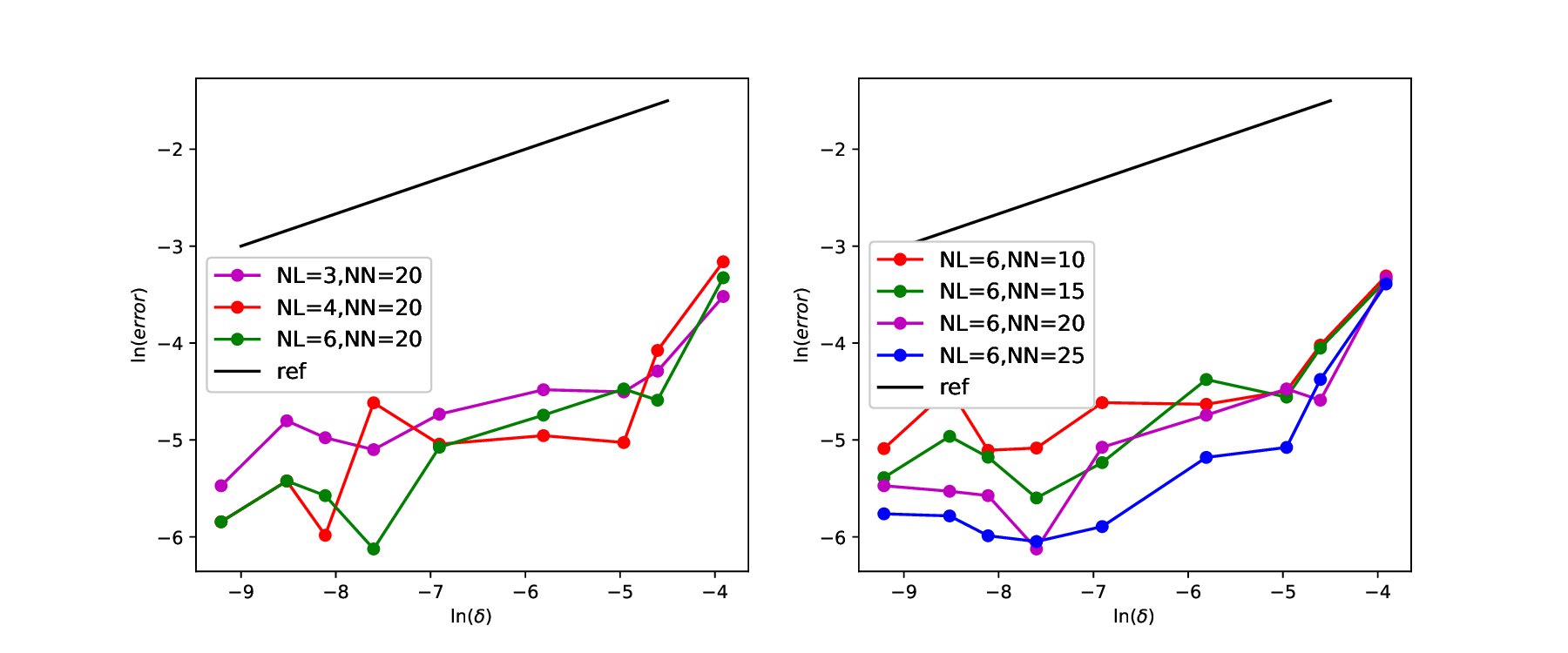}
\caption{The reconstruction error of $q$ versus noise level $\delta$ under various layer numbers for fixed per-layer neuron numbers $\text{NN}=20$ (left), and under various per-layer neuron for fixed layer numbers $\text{NL}=6$ (right). }\label{ErrorRate}
\end{figure}

In order to evaluate the effectiveness of the proposed scheme in terms of hyper-parameters and network structure, a series of experiments are conducted. Specifically, we examine the impact of the balance hyper-parameter $\lambda$ in \eqref{Loss1} and the network structure, including the number of hidden layers and neurons. For a fixed number of hidden layers ($\text{NL}=3$) and a fixed number of neurons per-layer ($\text{NN}=20$), we compute the reconstruction errors (mean and standard deviation) for $q$ and $u$ using various values of $\lambda$, such as
$\lambda=10^{j},\ -4\le j\le 1$. The results of these experiments are presented in Table \ref{tab4-D-Ab1-0}, which indicates that the performance of the inverse problem is highly dependent on the balance hyper-parameter $\lambda$. Specifically, we find that the relative reconstruction errors are optimized when $\lambda$ is set to $10^{-2}$. Furthermore, we observe that the reconstruction errors increase significantly as $\lambda$ exceeds this optimal value. These results suggest that the selection of the balance hyper-parameter is critical to achieving good performance in this inverse problem.

Next we experiment with various combinations of hidden layers and neuron numbers for the inverse problem using Algorithm \ref{alg1}. We set the dimension to $d=2$ and try a total of $16$ combinations of hidden layers (NL) and per-layer neuron numbers (NN), with
$$\text{NL}=3,6,9,14,\quad \text{NN}=10,15,20,25.$$ For each combination, we run Algorithm \ref{alg1} for $1\times 10^5$ iterations and record the relative errors (mean and standard deviation) for $(q,u)$ in Table \ref{tab4-D-Ab1}.
It indicates that deeper (larger NL) and/or wider (larger NN) neural networks tend to yield lower reconstruction errors, although this causes higher computational cost. However, we also observe that for fixed neuron number NN, increasing the number of hidden layers NL, for example $\text{NL}\geq 15$, causes the algorithm fail to converge as the number of iterations increases. This suggests that increasing the number of layers and/or neurons can enhance the representation capacity of neural networks. But it may also introduce more parameters to train, and lead to longer training times and potential overfitting of the representation.

\begin{table}[h!]
\centering
\caption{The relative errors for $q$ (first line) and $u$ (second line) using various hyper-parameter $\lambda$ from noisy measurement data with $\delta=0.01$, where the hidden layers $\text{NL}=3$, per-layer neurons $\text{NN}=20$. } \label{tab4-D-Ab1-0}
\tabcolsep0.04 in
\begin{tabular}{cccccccccc}
\specialrule{0.05em}{13pt}{3pt}
   Error &$\lambda=10^{-4}$  &$\lambda=10^{-3}$  &$\lambda=10^{-2}$ \\
\specialrule{0.05em}{3pt}{3pt}
  ${Re}_q$   &$2.5113\%\pm 1.1339\%$  & $0.9942\% \pm 0.1121\%$   & $0.7124\% \pm 0.0639\%$   \\
  \specialrule{0.05em}{3pt}{3pt}
  ${Re}_u$    &$ 0.1374\% \pm 0.0652\%$  & $0.1216\% \pm 0.0320\%$     & $0.0560\% \pm 0.0204\%$  \\
\specialrule{0.05em}{3pt}{3pt}
 Error   & $\lambda=10^{-1}$  &$\lambda=1$ &$\lambda=10$\\
\specialrule{0.05em}{3pt}{3pt}
  ${Re}_q$    &$0.8388\%\pm \underline{}0.2448\%$
 &$1.0809\% \pm 0.1741$\%   &$2.2891\%\pm 1.3302\%$\\
  \specialrule{0.05em}{3pt}{3pt}
  ${Re}_u$  &$0.0331\%\pm 0.0094\%$   &$0.0478\% \pm 0.0122\%$   &$0.0874\% \pm 0.0110\%$\\
\specialrule{0.05em}{3pt}{3pt}
\end{tabular}
\end{table}













\begin{table}[h!]
\centering
\caption{The relative errors for $q$ (top) and $u$ (bottom) using various combinations of (NL,NN) for problem dimension $d=2$ from noisy measurement data with $\delta=0.01$, where the hyper-parameter $\lambda=10^{-2}$. } \label{tab4-D-Ab1}
\tabcolsep0.04 in
\begin{tabular}{ccccccccc}
\specialrule{0.05em}{13pt}{3pt}
   $NL$ &$NN=10$  &$NN=15$ &$NN=20$ &$NN=25$ \\
\specialrule{0.05em}{3pt}{3pt}
  $3$   &$1.7258\% \pm 0.6258\% $  &$1.1664\% \pm 0.3531\%$  & $0.7124\% \pm 0.0639\%$   & $0.5978\% \pm 0.1071\% $\\

  $6$    &$ 1.5015\% \pm 0.6640\%$ & $0.7234  \% \pm 0.2407\% $    & $0.4482\% \pm 0.1065\% $  & $0.3093\% \pm 0.0310\%$\\

$9$    &$0.7797\% \pm 0.1577\%$   &$ 0.4788\% \pm 0.1598\% $   & $0.4796\% \pm 0.0900 \%$ & $0.4874\%\pm 0.2034\% $\\

$14$    &$0.5602\% \pm 0.2118\% $ & $0.4077\% \pm 0.1990 \%$   & $0.3958\% \pm 0.0552\%$  & $ 0.4108\% \pm 0.0637\%$\\

\specialrule{0.05em}{3pt}{3pt}
 $3$   &$0.0841 \% \pm 0.0216\%$  &$0.0615 \% \pm 0.0290\%$  & $0.0560\% \pm 0.0204\%$   & $0.0306\% \pm 0.0105\% $\\

  $6$    &$ 0.0575\% \pm 0.0238\%$ & $0.0490\% \pm 0.0062\% $    & $0.0280\% \pm 0.0071\% $  & $0.0358\% \pm 0.0090\%$\\

$9$    &$0.0751 \% \pm 0.0129\% $ &$ 0.0433\% \pm 0.0037\% $   & $0.0523\% \pm 0.0166 \%$ & $0.0314\% \pm 0.0132\% $\\

$14$    &$0.0806\% \pm 0.0103\%$  & $0.0612\% \pm 0.0144\% $   & $0.0593\% \pm 0.0145\%$  & $0.0422\% \pm 0.0115\%$\\

\specialrule{0.05em}{3pt}{3pt}
\end{tabular}
\end{table}

\paragraph{Example 3 (three-dimensional experiment):}
We also take the following 3-dimensional experiment. We set the exact solution and the domain of equation \eqref{eq1} as
\begin{eqnarray*}
u(x,y,t)=t\exp(x+y+z), \quad (t,x,y,z)\in\Omega_T=[0,1]^4.
\end{eqnarray*}
The exact potential $q$ is given as
\begin{eqnarray*}
q(x,y,z)=x+y+z.
\end{eqnarray*}

We also employ fully-connected neural networks with $\text{NL}=4,\ \text{NN}=20$ for both $u_{\theta}$ and $q_{\eta}$. The number of training points are $$\text{N}=N_{int}+N_{sb}+N_{tb}+N_{d}=256+256\times6+256+256=2304,$$ which are randomly sampled from four different training sets.
The other network architectures and hyper-parameters such as activation function, balance hyper-parameter $\lambda$, number of training epochs, and initial learning rate are all the same as Example 1.
The test sets are chosen by a uniform mesh
$$\mathcal{T}:=\{(t_k,x_i,y_j,z_l):\  k,i,j,l=0,1,\cdots,49\}\subset \Omega_T.$$
Figure \ref{PINN-q-exact-3D} shows the exact potential function $q$ and the relative errors versus iterations during training process for different noise scales $\delta=0,\ 1\%,\ 5\%,\ 10\%$. Figure \ref{PINN-q-3D} presents the potential functions $q_{\eta}$ recovered from different noise levels and the corresponding point by point absolute errors on test sets. The inversion results are satisfactory and reasonable overall.

\begin{figure}[h!]
\centering
\includegraphics[width=0.9\textwidth,height=0.2\textheight]{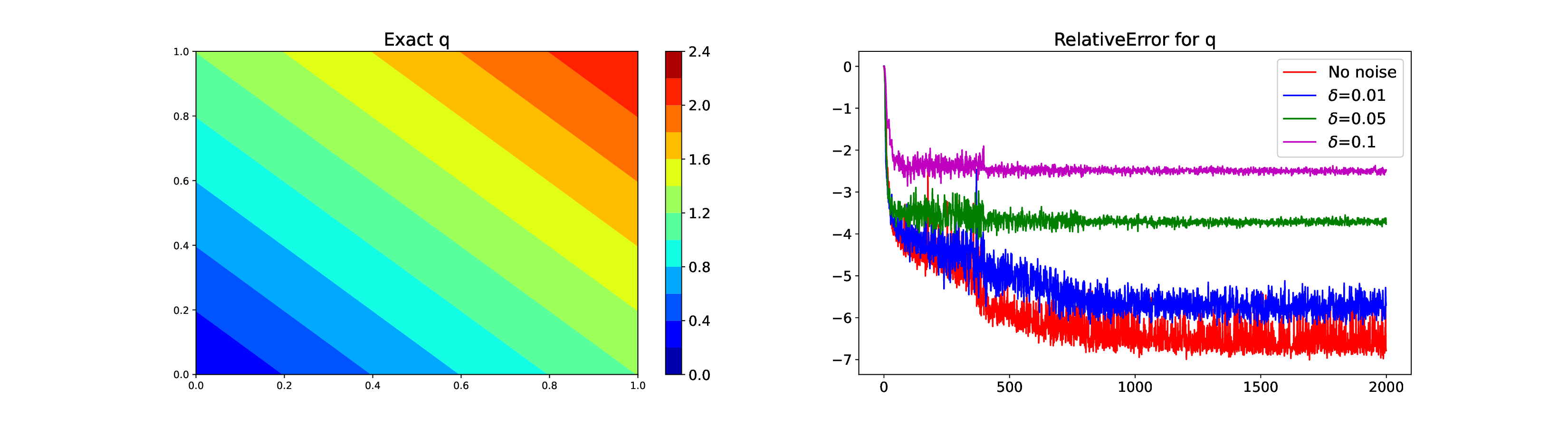}
\caption
{The exact potential function $q$ (left), the relative error for recovered $q$ with different noise levels (right).}\label{PINN-q-exact-3D}
\end{figure}

\begin{figure}[h!]
\centering
\includegraphics[width=1.1\textwidth,height=0.35\textheight,center]{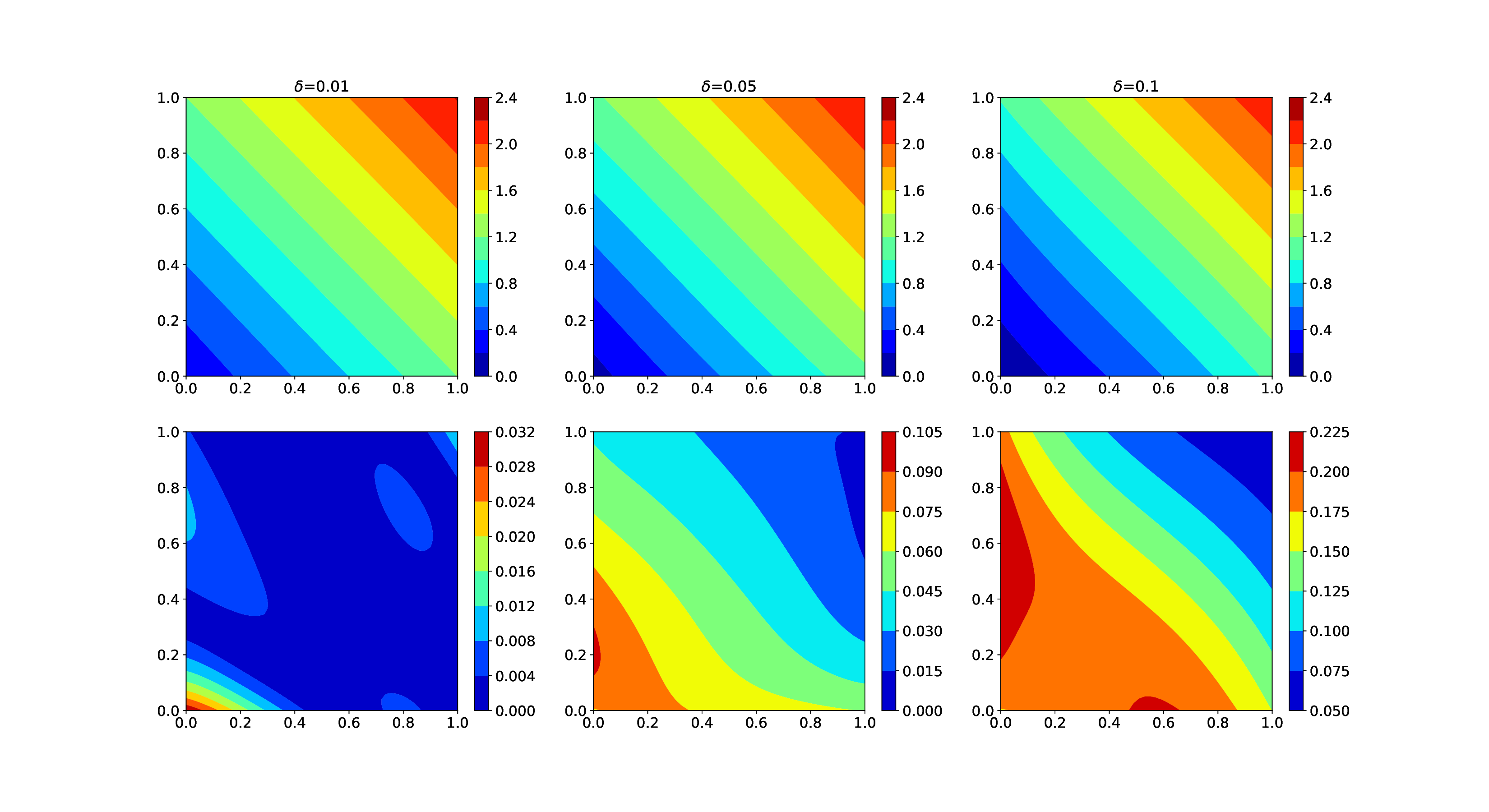}
\caption{The reconstruction of potential function $q$ (first line), absolute pointwise error $|q-q^*|$ (second line) for various noise levels.}\label{PINN-q-3D}
\end{figure}

Finally, we conduct the experiments to evaluate the robustness of the proposed scheme in terms of network structure (number of hidden layers and per-layer nurons).
More specifically, we run Algorithm \ref{alg1} with per-layer neuron numbers $\text{NN}=5,10,20,25$ for fixed hidden layers $\text{NL}=6$ and with hidden layers $\text{NL}=4,6,8,10$ for fixed per-layer neuron numbers $\text{NN}=25$, respectively. The reconstruction errors are presented in Figure \ref{PINN-Robust}. It also seems that larger NL and/or larger NN neural networks yield lower reconstruction error. In this example, for fixed hidden layer $\text{NL}=6$, we test the per-layer neuron numbers $\text{NN}\geq 30$ and find that there will be bad reconstruction result, that is, the relative error for $q$ with $\text{NL}=6, \text{NN}=30$ is larger than the case $\text{NN}=10,20,25$ as the iterations increasing. Therefore, more layers and/or neurons with much more parameters to train, yield longer training time, and may result in overfitting of the reconstruction.

\begin{figure}[h!]
\centering
\includegraphics[width=1.0\textwidth,height=0.25\textheight,center]{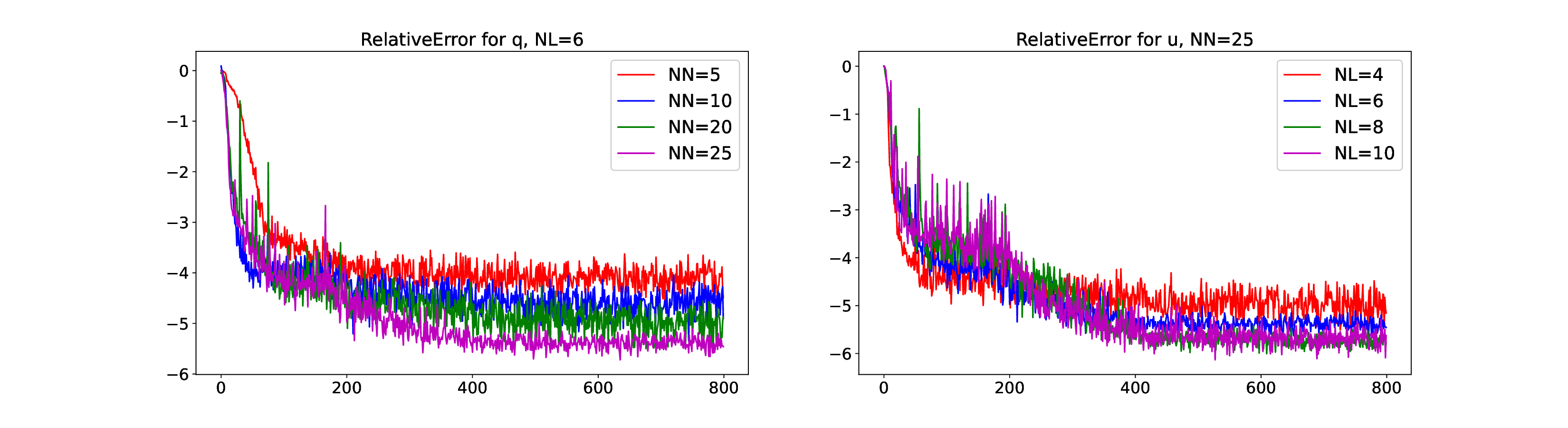}
\caption{The reconstruction error of $q$ under various per-layer neuron numbers with fixed layer numbers NL=6 (left), the reconstruction error of $q$ under various layer numbers with fixed per-layer neuron number NN=25 (right).}\label{PINN-Robust}
\end{figure}

\section{Concluding remarks.}\label{Sec4}
In this work, a deep neural network-based reconstruction scheme has been proposed to solve an inverse potential problem in the parabolic equation. The proposed method has shown superior performance in high-dimensional space.
We prove the uniqueness of the inverse potential problem. A new loss function has been introduced, which includes regularization terms that depend on the derivatives of the residuals for both the partial differential equation and the measurement data. These regularization terms aim to address the ill-posedness of the inverse problem and enhance the regularity of the solution. Additionally, the mollification method has been employed to improve the regularity of the noisy data, where it can reduce the perturbation errors caused by numerical differentiation on the noisy data. Generalization estimates based on the conditional stability of linear inverse source problems and the mollification error estimate on noisy data have been established, which provide a measure of the stability and accuracy of the proposed method in solving the inverse potential problem. Numerical experiments have been conducted to evaluate the performance of proposed method, which indicate the efficiency the approach in this work.

\section*{Acknowledgments}

Mengmeng Zhang is supported by Foundation of Hebei University of Technology (Grant No.282022550) and Foundation of Tianjin Education Commission Research Program(Grant No.2022KJ102). Zhidong Zhang is supported by National Natural Science Foundation of China (Grant No. 12101627).

\bibliographystyle{plainurl} 
\bibliography{main}

\end{document}